\documentclass[12pt]{amsart}
\usepackage{amsmath}
\usepackage{amsfonts}
\usepackage{amssymb}
\usepackage{amsthm}
\usepackage{eucal}
\newtheorem{theorem}{Theorem}[section]

\usepackage{mathrsfs}
\usepackage{eucal}
\usepackage{graphicx}
\newcommand{\qq}{\mathbb{Q}}
\newcommand{\zz}{\mathbb{Z}}
\newcommand{\cc}{\mathbb{F}}
\newcommand{\pp}{\mathbb{P}}

\newcommand{\rr}{\mathbb{R}}

\renewcommand{\P}{\mathcal{P}}

\newcommand{\E}{\mathcal{E}}

\renewcommand{\L}{\mathcal{L}}
\newcommand{\M}{\mathcal{M}}
\newcommand{\X}{\mathcal{X}}

\renewcommand{\H}{\mathcal{H}}

\renewcommand{\O}{\mathscr{O}}

\renewcommand{\tilde}{\widetilde}
\usepackage{dutchcal}
\newcommand{\f}{\mathcal{f}}
\newcommand{\B}{\mathcal{B}}

\DeclareMathOperator{\codim}{codim}
\newtheorem{Lemma}[theorem]{Lemma}
\newtheorem{Corollary}[theorem]{Corollary}

\DeclareMathOperator{\Pic}{Pic}
\DeclareMathOperator{\Supp}{Supp}
\DeclareMathOperator{\Jac}{Jac}
\DeclareMathOperator{\End}{End}
\DeclareMathOperator{\Def}{Def}
\DeclareMathOperator{\tr}{Tr}
\DeclareMathOperator{\rank}{rank}
\DeclareMathOperator{\res}{res}
\DeclareMathOperator{\ord}{ord}
\DeclareMathOperator{\diag}{\mathfrak{d}}
\newcommand{\ee}{{\vec{e}} \nobreak\hspace{.16667em plus .08333em}'}
\newcommand{\Sigmabar}{\overline{\Sigma}}
\newcommand{\e}{\vec{e}}
\usepackage{stmaryrd}
\renewcommand{\ll}{\llbracket}
\renewcommand{\rr}{\rrbracket}
\theoremstyle{definition}
\newtheorem{Example}[theorem]{Example}
\newtheorem{Definition}[theorem]{Definition}
\theoremstyle{remark}
\newtheorem*{remark}{Remark}

\theoremstyle{remark}
\newtheorem*{warning}{Warning}
\usepackage{tikz}
\usepackage{tikz-cd}
\usepackage{wasysym, stackengine, makebox, graphicx}

\tikzset{cong/.style={draw=none,edge node={node [sloped, allow upside down, auto=false]{$\cong$}}},
         Isom/.style={draw=none,every to/.append style={edge node={node [sloped, allow upside down, auto=false]{$\cong$}}}}}

\usetikzlibrary{matrix,arrows,decorations.pathmorphing}
\usepackage[margin=1in]{geometry}

\numberwithin{equation}{section}

\title{A refined Brill-Noether theory over Hurwitz spaces}
\author{Hannah K. Larson}
\date{\today}
\begin{document}
\maketitle

\begin{abstract}
Let $f\colon  C \rightarrow \pp^1$ be a degree $k$ genus $g$ cover.
The stratification of line bundles $L \in \Pic^d(C)$ by the splitting type of $f_*L$ is a refinement of the stratification by Brill-Noether loci $W^r_d(C)$.
We prove that for general degree $k$ covers, these strata are smooth of the expected dimension.
In particular, this determines the dimensions of all irreducible components of $W^r_d(C)$ for a general $k$-gonal curve (there are often components of different dimensions), extending results of Pflueger \cite{Pf} and Jensen-Ranganathan \cite{JR}. The results here apply over any algebraically closed field.
\end{abstract}

\section{Introduction}

Brill-Noether theory characterizes the maps of general curves to projective space. 
Degree $d$ maps of a curve $C \rightarrow \pp^r$ correspond to line bundles in the \textit{Brill-Noether locus}
\[W^r_d(C) = \{ L \in \Pic^d(C): h^0(C, L) \geq r+1\}.\] 
The fundamental Brill-Noether-Petri theorem \cite{GH,G} states that for a general curve $C$ of genus $g$, 
\[\dim W^r_d(C) = \rho(g, r, d) := g - (r+1)(g - d + r),\]
and $W^r_d(C)$ is smooth away from $W^{r+1}_d(C)$. Furthermore, if $\rho(g, r, d) > 0$, then $W^r_d(C)$ is irreducible \cite{FL}.

For certain special curves, $W^r_d(C)$ can be reducible and have components of larger than the expected dimension.
In particular, Coppens--Martens showed that for a general $k$-gonal curve, $W^r_d(C)$ has a component of dimension $\rho(g, \alpha - 1, d) - (r+1 - \alpha)k$ for $\alpha = 1, r, r+1$ \cite{CM1}, which they later extended to $\alpha$ dividing $r$ or $r+1$ \cite{CM2}.
In \cite{Pf}, Pflueger proved that for general $k$-gonal curves,
\begin{equation} \label{theirformula}
\dim W^r_d(C) \leq \rho_k(g, r, d) := \max_{\ell \in \{0, \ldots, r'\}} \rho(g, r - \ell, d) - \ell k,
\end{equation}
where $r' := \min\{r, g-d+r -1\}$.
Jensen-Ranganathan \cite{JR} then established the existence of a component of the maximum possible dimension, thereby determining $\dim W^r_d(C) = \rho_k(g, r, d)$. 
However, many questions about its geometry remain: What are the dimensions of all components? How many components are there? Where are they smooth?
This paper determines the dimensions of all irreducible components and shows that they are smooth away from ``further degenerate loci." The key insight is that splitting loci capture more precise information that yields better control over the geometry of $W^r_d(C)$. 

We work on the Hurwitz space $\H_{k,g}$ parametrizing smooth degree $k$, genus $g$ covers $f\colon  C \rightarrow \pp^1$ over an arbitrary algebraically closed field $\cc$.
Given a line bundle $L$ on such a curve $C$, the push forward $f_* L$ is a rank $k$ vector bundle on $\pp^1$. 
By Riemann-Roch, the degree of the push forward is
\begin{equation} \label{rr}
\deg(f_*L) = \chi(C, L) - k = d - g + 1 - k.
\end{equation}
Every vector bundle on $\pp^1$ is isomorphic to $\O(e_1) \oplus \cdots \oplus \O(e_k)$ for some collection of integers $\vec{e} = (e_1, \ldots, e_k)$ with $e_1 \leq \cdots \leq e_k$.
We call such a collection $\vec{e}$ the \textit{splitting type} and abbreviate the corresponding sum of line bundles by $\O(\vec{e})$.
We then define \textit{Brill-Noether splitting loci} by
\[\Sigma_{\vec{e}}(C, f) := \{L \in \Pic^d(C) : f_*L \cong \O(\vec{e})\}. \]
Geometrically, line bundles in $\Sigma_{\vec{e}}(C, f)$ correspond to maps of $C$ into the rational scroll $\pp \O(\vec{e})^\vee \rightarrow \pp^1$ compatible with $f$.
The \textit{expected codimension} of $\Sigma_{\vec{e}}(C, f)$ is defined as
\[u(\vec{e}) := h^1(\pp^1, End(\O(\vec{e}))) = \sum_{i < j} \max \{0, e_j - e_i - 1\}.\]

The specialization of splitting types follows certain rules. Given two splitting types $\ee = (e_1', \ldots, e_k')$ and $\e = (e_1, \ldots, e_k)$, we define a partial ordering by $\ee \leq \e$ if $\O(\e)$ can specialize to $\O(\ee)$, that is if $e_1' + \ldots + e_j' \leq e_1 + \ldots + e_j$ for all $j$ (see Section \ref{split}).
We define \textit{Brill-Noether splitting degeneracy loci} by
\begin{align*}
\Sigmabar_{\vec{e}}(C, f) &:= \bigcup_{\ee \leq \e} \Sigma_{\vec{e}}(C, f).
\end{align*}
Let $H = f^*\O_{\pp^1}(1)$ be the distinguished $g^1_k$ on $C$. One may readily check
\begin{align*}
\Sigmabar_{\vec{e}}(C, f) &= \{L \in \Pic^d(C) : h^0(C, L(mH)) \geq h^0(\pp^1, \O(\vec{e})(m)) \text{ for all } m\}.
\end{align*}
Thus, splitting degeneracy loci provide a refinement of the stratification of $\Pic^d(C)$ by Brill-Noether loci $W^r_d(C)$.
In particular,
\begin{equation} \label{W}
W^r_d(C) = \bigcup_{\substack{|\vec{e}| = d - g + 1- k \\ h^0(\O(\vec{e})) \geq r+1}} \Sigmabar_{\vec{e}}(C, f) = \bigcup_{\substack{\vec{e} \text{ maximal:} \\ h^0(\O(\vec{e})) \geq r+ 1}}\Sigmabar_{\vec{e}}(C, f),
\end{equation}
where $|\vec{e}| = e_1 + \ldots + e_k$. The maximal splitting types among those with $r+1$ global sections have  a ``balanced plus balanced" shape and are uniquely determined by the number of nonnegative summands.
 When the rank and degree are understood, we define $\vec{w}_{r,\ell}$ to be the splitting type with $r+1 - \ell$ nonnegative parts that is maximal among those with $r+1$ global sections (see Lemma \ref{wlem}). In terms of these splitting loci, we can rewrite Pfleuger's formula \eqref{theirformula} suggestively to see
 \[W^r_d(C) = \bigcup_{\ell} \Sigmabar_{\vec{w}_{r,\ell}}(C, f) \qquad \text{and} \qquad \rho_k(g, r, d) = \max_{\ell} \left( g - u(\vec{w}_{r,\ell}) \right).\]

The following example demonstrates the more subtle geometry splitting loci capture.
\begin{Example} \label{Geoff}
Suppose $f\colon  C \rightarrow \pp^1$ is a general trigonal curve of genus $5$. 
By \eqref{rr}, the push forward of a degree $4$ line bundle on $C$ is a rank $3$, degree $-3$ vector bundle on $\pp^1$.
The diagram below describes the partial ordering of splitting types in the stratification of $\Pic^4(C)$ by Brill-Noether splitting loci.
\begin{center}
\begin{tikzpicture}
\draw (0, 0) node {$(-1, -1, -1)$};
\draw [<-] (0, -.3) -- (0, -.7);
\draw [black!30!green] (0, -1) node {$(-2, -1, 0)$};
\draw [red] (-2, -2) node {$(-2, -2, 1)$};
\draw [blue] (2, -2) node {$(-3, 0, 0)$};
\draw [<-] (-1, -2-.3) -- (-.5, -2-.7);
\draw [<-] (1, -2-.3) -- (.5, -2-.7);
\draw [<-] (-.5, -1-.3) -- (-1, -1-.7);
\draw [<-] (.5, -1-.3) -- (1, -1-.7);
\draw [violet] (0, -3) node {$(-3, -1, 1)$};
\end{tikzpicture}
\hspace{.5in}
\begin{tikzpicture}
\draw [black!30!green] (3, 4+.75+1) node {\tiny $\Sigma_{(-2,-1,0)}$};
\draw (2.84,4.5) ellipse (3cm and 1.8cm);
\draw [black!30!green] (2.84,4.5) ellipse (2cm and 1cm);
\draw [red] (1.2,4.8) .. controls (3, 4) .. (4.5, 4.8);
\draw [red] (1.9, 4.9) node {\tiny $\Sigma_{(-2,-2,1)}$};
\draw [blue] (1.2,4.1) .. controls (3, 4.8) .. (4.5, 4.1);
\draw [blue] (2, 3.95) node {\tiny $\Sigma_{(-3,0, 0)}$};
\draw [violet] (2.05, 4.425) node [circle,fill,inner sep=1pt]{};
\draw [violet] (3.78, 4.425) node [circle,fill,inner sep=1pt]{};
\draw [violet] (4.75, 4.4) node {\tiny $\Sigma_{(-3, -1, 1)}$};
\draw (4.3, 6.4) node {\tiny $\Pic^4(C)$};
\end{tikzpicture}
\end{center}
We have
\[\Sigmabar_{(-1, -1, -1)}(C, f) = \Pic^4(C) \qquad \text{and} \qquad \Sigmabar_{(-2, -1, 0)}(C, f) = W^0_4(C).\]
Meanwhile, the Brill-Noether locus $W^1_4(C)$ consists of two (intersecting) components, which are distinguished by splitting type of the push forward.
 If $H$ is the trigonal class and $K$ is the canonical divisor, we have
\begin{align*}
\Sigmabar_{(-2, -2, 1)}(C, f) &= \{L \in \Pic^4(C) : h^0(C, L(-H)) \geq 1\} = \{H + p\}_{p \in C} \\
\Sigmabar_{(-3, 0, 0)}(C, f) &=  \{L \in \Pic^4(C) : h^0(C, L(H))  \geq 4\} = \{K - H - q\}_{q \in C}.
\end{align*}
We recognize these splitting types as $\vec{w}_{1,1} = (-2, -2, 1)$ and $\vec{w}_{1,0} = (-3, 0, 0)$.
Finally, the splitting locus $\Sigmabar_{(-3, -1, 1)}(C, f)$ is the intersection of the two curves above. 
If $p_0$ and $q_0$ denote the unique pair of points such that $p_0 + q_0 \sim K - 2H$, 
the intersection consists of the two points $H + p_0 \sim K - H - q_0$ and $H + q_0 \sim K - H - p_0$. This example will be revisited in Example \ref{revisit}.
\end{Example}

Our main result determines the dimensions and smoothness of all Brill-Noether splitting loci for general degree $k$ covers.
\begin{theorem} \label{main}
Let $f\colon  C \rightarrow \pp^1$ be a general degree $k$, genus $g$ cover.
Let $d$ be any integer and let $\vec{e} = (e_1, \ldots, e_k)$ be a collection of integers with $e_1 + \ldots + e_k = d - g - k + 1$.
If $u(\vec{e}) \leq g$, then $\Sigma_{\vec{e}}(C, f)$ is smooth of pure dimension $g - u(\vec{e})$. If $u(\vec{e}) > g$ then $\Sigma_{\vec{e}}(C, f)$ is empty.
\end{theorem}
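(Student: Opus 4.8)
The plan is to realize the Brill–Noether splitting loci as degeneracy loci in a suitable universal family and then argue that, for a general cover, these degeneracy loci have the expected codimension and are smooth. First I would work relatively over the Hurwitz space $\H_{k,g}$: let $\pi\colon \mathcal{C} \to \H_{k,g}$ be the universal curve with its degree $k$ map $\mathfrak{f}\colon \mathcal{C} \to \pp^1 \times \H_{k,g}$, and let $\mathcal{L}$ be a Poincaré bundle of relative degree $d$ on $\mathcal{C} \times_{\H_{k,g}} \Pic^d$. Pushing forward along $\mathfrak{f}$ gives a family of rank $k$ bundles on $\pp^1$ parametrized by the relative Picard scheme $\mathcal{P} := \Pic^d(\mathcal{C}/\H_{k,g})$; twisting by $\O_{\pp^1}(m)$ for $m \gg 0$ and pushing to $\mathcal{P}$ realizes $\Sigma_{\vec e}$ and $\Sigmabar_{\vec e}$ as splitting-type loci of a family of bundles on $\pp^1$. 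The key is the deformation theory of such loci: the tangent space to $\Sigma_{\vec e}$ at a point $[L]$ fits into an exact sequence built from $H^0(C, L) \to H^0(\pp^1, \O(\vec e))$-type maps, and the obstruction to smoothness lives in $H^1(\pp^1, End(\O(\vec e)))$, whose dimension is exactly $u(\vec e)$. So over the \emph{universal} family $\Sigma_{\vec e}(\mathcal{C}/\H_{k,g})$ is cut out, étale-locally, by $u(\vec e)$ equations, hence every component has codimension at most $u(\vec e)$ in $\mathcal{P}$.

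The heart of the argument is then a dimension count combined with a transversality/generic-smoothness statement. One shows that the universal splitting locus $\widetilde\Sigma_{\vec e} \subseteq \mathcal{P}$ is irreducible (or at least that one understands its components) and has dimension exactly $\dim \H_{k,g} + g - u(\vec e)$ when $u(\vec e)\le g$, and is empty otherwise; this is plausibly done by an explicit construction, exhibiting a single cover $(C,f)$ together with an $L \in \Sigma_{\vec e}$ whose local deformation space has the expected dimension — for instance by building $C$ inside the scroll $\pp\O(\vec e)^\vee$ and computing the normal bundle, or by a degeneration to chains of elliptic/rational curves in the style of Pflueger and Jensen–Ranganathan. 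Given that the universal locus has the expected dimension and the map to $\H_{k,g}$ is generically smooth (here one invokes that everything is happening over a base and uses generic smoothness in characteristic zero, with a separate Frobenius/charactersitic-$p$ argument or an explicit smoothness check to handle arbitrary algebraically closed $\cc$, as the abstract promises), a general fiber $\Sigma_{\vec e}(C,f)$ has dimension $g - u(\vec e)$ and, being cut out by $u(\vec e)$ equations in the $g$-dimensional $\Pic^d(C)$, is smooth of that pure dimension.

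I expect the main obstacle to be the two-part claim that the universal splitting locus is nonempty of the expected dimension and that the projection to $\H_{k,g}$ is dominant with generically smooth (or at least generically reduced of the right dimension) fibers — this is where the "general cover" hypothesis is really used and where a clean degeneration or explicit model must be produced. A secondary subtlety, flagged in the abstract, is working over an arbitrary algebraically closed field: generic smoothness is not automatic, so one needs either an explicit transversality computation at a well-chosen point valid in all characteristics, or an argument that the universal locus is smooth over $\H_{k,g}$ outright (so that all fibers, not just general ones, behave well). Packaging the deformation theory functorially — identifying the tangent–obstruction theory of $\Sigma_{\vec e}$ with that of the corresponding map to the scroll, uniformly in families — is the technical glue that makes the codimension bound and the smoothness statement come out simultaneously.
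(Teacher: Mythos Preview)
Your proposal correctly identifies the deformation-theoretic framework---the tangent space to $\Sigma_{\vec e}(C,f)$ at $L$ is the kernel of $H^1(\O_C)\to H^1(End(f_*L))$, and smoothness of the stratum is equivalent to surjectivity of this map---but the actual content of the proof is missing. You write ``one shows that the universal splitting locus \ldots\ has dimension exactly $\dim\H_{k,g}+g-u(\vec e)$'' and ``the map to $\H_{k,g}$ is generically smooth,'' but these are precisely the statements to be proved, and you have not supplied a mechanism. Invoking generic smoothness of the projection is not available in positive characteristic (as you note), and your suggested workaround---an explicit transversality check at a well-chosen point---is exactly what must be done, in \emph{all} characteristics, and is the entire difficulty.

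The paper fills this gap with three concrete ingredients, none of which your outline anticipates. First, for the upper bound $\dim\Sigma_{\vec e}\le g-u(\vec e)$ and for smoothness, the paper degenerates to a chain of $g$ elliptic curves $X_i$ each mapping with degree $k$ to a chain of $\pp^1$'s, but the analysis is \emph{not} in the style of Pflueger or Jensen--Ranganathan (limit linear series or tropical divisors). Instead, one studies which global sections of $End(\f_*\L_0)$ on the reducible target deform to the smooth fiber: these are tuples $(\phi_1,\ldots,\phi_g)$ with $\phi_i\in H^0(End((f_i)_*L_i))$ satisfying explicit ``order-preserving'' conditions at each node plus matching of certain diagonal invariants (Lemma~\ref{cond}). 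An elementary but delicate computation on a single elliptic curve (Lemma~\ref{zing}) shows these conditions are independent for generic $L_i$, yielding the dimension bound; a refinement shows the Serre-dual map $H^0(End(f_*L)\otimes\omega_{\pp^1})\to H^0((f_*\O_C)^\vee\otimes\omega_{\pp^1})$ is injective, giving smoothness directly and characteristic-free. Second, for existence, the paper does \emph{not} construct an explicit $(C,L)$ or compute a normal bundle in a scroll. Rather, it uses the universal splitting-degeneracy formulas of \cite{L} together with the observation (Lemma~\ref{hoho}) that the expected class is $a_{\vec e}\,\theta^{u(\vec e)}$ with $a_{\vec e}\in\qq$ \emph{independent of $g$}; one then computes $a_{\ee}\neq 0$ for a single easy specialization $\ee=(-n,*,\ldots,*)\le\vec e$ at some large genus, forcing $a_{\vec e}\neq 0$ as well. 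This intersection-theoretic trick sidesteps any direct construction.
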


The case $k = 2$ is a classical result of Clifford. (In this case, $\Sigmabar_{\vec{e}}(C, f)$ is the image of a certain symmetric power of $C$ in $\Pic^d(C)$.) We therefore assume for the rest of the paper that $k > 2$.
\begin{remark}
The Hurwitz space $\H_{k,g}$ is only known to be irreducible when the characteristic of the ground field is greater than $k$. If the characteristic is less than or equal to $k$, then by ``general degree $k$ cover" we shall mean a general deformation of our degeneration, so the conclusion holds for general $C \to \pp^1$ in some component of $\H_{k,g}$.
\end{remark}

\begin{remark}
Since it has the expected codimension, the class of $\Sigmabar_{\vec{e}}(C, f)$ is determined by the universal splitting degeneracy formulas of \cite{L}
 (see  Example \ref{revisit}).
 \end{remark}

\vspace{.1in}
\noindent
\textbf{Key ideas of the proof.}
\begin{enumerate}

\item To prove that our splitting loci are smooth of the expected dimension,
it suffices to show that for general $C$, the natural map
\[\Def^1(L) = H^1(\O_C) \rightarrow H^1(End(f_*L)) = \Def^1(f_*L)\]
is surjective for all $L \in \Pic^d(C)$.
Equivalently, by Serre duality, we seek to show that
\begin{equation} \label{sdd}
H^0(End(f_*L)(-p-q)) \to H^0(\omega_C)
\end{equation}
is injective, where $p, q$ are points on $\pp^1$
(whose sum gives the dualizing sheaf).

\item To accomplish this, we degenerate $C$ to a chain of elliptic curves, each mapping with degree $k$ to a chain of $\pp^1$'s.
On such reducible curves, \eqref{sdd} is \emph{not} necessarily injective.

\item To solve this problem,
we give an explicit description
of which endomorphisms of $f_* L$ deform with the curve.
We then apply this description to the kernels of \eqref{sdd} to show
that none of these sections deform with a general smoothing.

\item To show that the expected splitting loci are non-empty, we prove their expected classes are non-zero.
Using the universal splitting degeneracy formulas in \cite{L}, we prove
\[[\Sigmabar_{\vec{e}}(C, f)] = a_{\vec{e}} \theta^{u(\vec{e})},\]
where $a_{\vec{e}} \in \qq$ depends only on $\vec{e}$ \textit{and not on $g$}.
In general, the formulas for $a_{\vec{e}}$ are intractible to compute directly.
Instead, we deduce $a_{\vec{e}} \neq 0$ for $g$ sufficiently large by calculating $a_{\ee}$ for suitably chosen specialization $\ee$ of $\vec{e}$ where the formulas become simple.
\end{enumerate}
\vspace{.1in}

As a special case, Theorem \ref{main} determines the dimensions of all components of $W^r_d(C)$, thereby answering Question 1.12 of \cite{Pf}, and giving new proofs of the theorems in \cite{JR, Pf}.
\begin{Corollary} \label{cor}
Let $C$ be a general $k$-gonal curve of genus $g$. Let $\vec{w}_{r, \ell}$ denote the rank $k$, degree $d - g +k-1$ splitting type with $r+1 - \ell$ nonnegative parts that is maximal among those with $r+1$ global sections.
Every component of $W^r_d(C)$ is generically smooth of dimension $g - u(\vec{w}_{r,\ell}) = \rho(g, r - \ell, d) - \ell k $ for some $\max\{0, r + 2 - k\} \leq \ell \leq r$ such that $\ell = 0$ or $\ell \leq  g - d + 2r + 1 - k$.
Such a component exists for each $\ell$ with $g - u(\vec{w}_{r,\ell})  \geq 0$.
\end{Corollary}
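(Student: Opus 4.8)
The plan is to read the corollary off from Theorem~\ref{main} together with the reformulation
\[W^r_d(C) = \bigcup_{\ell} \Sigmabar_{\vec w_{r,\ell}}(C,f)\]
recorded after \eqref{W}. By Lemma~\ref{wlem}, as $\ell$ runs over the stated set, the $\vec w_{r,\ell}$ are exactly the maximal splitting types of rank $k$ and degree $d-g+k-1$ with at least $r+1$ global sections; in particular distinct $\vec w_{r,\ell}$ are pairwise incomparable in the specialization order, and one has the identity $g - u(\vec w_{r,\ell}) = \rho(g,r-\ell,d) - \ell k$ (the termwise form of the rewriting of Pflueger's bound \eqref{theirformula} noted in the introduction). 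I also use that $u$ is order-reversing, $\ee \le \e \Rightarrow u(\ee) \ge u(\e)$ (Section~\ref{split}); combined with Theorem~\ref{main} this shows $\Sigmabar_{\vec e}(C,f) = \bigcup_{\ee \le \e}\Sigma_{\ee}(C,f)$ has dimension $\le g - u(\vec e)$, with equality only on the open stratum, and is empty when $u(\vec e) > g$.

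The one input beyond formal bookkeeping is the equality
\[\Sigmabar_{\vec e}(C,f) = \overline{\Sigma_{\vec e}(C,f)} \qquad \text{for general } C \text{ and } u(\vec e) \le g,\]
which I would extract from the mechanism of Theorem~\ref{main}. That proof shows that, for general $C$, the map $H^1(\O_C) \to H^1(\End(f_*L_0))$ is surjective for every $L_0 \in \Pic^d(C)$; since both deformation functors are smooth ($H^2$ vanishes on a curve and on $\pp^1$), this surjectivity makes the classifying morphism $\phi$ from a neighborhood of $L_0$ in $\Pic^d(C)$ to a versal deformation space of $f_*L_0$ smooth. Near $L_0$, the loci $\Sigma_{\vec e}(C,f)$ and $\Sigmabar_{\vec e}(C,f)$ are cut out by the same functorial rank conditions on $H^0$ of twists (using $h^0(C,L(mH)) = h^0(\pp^1,(f_*L)(m))$) that define the corresponding splitting and splitting-degeneracy loci in the versal deformation space, so they are the $\phi$-pullbacks of the latter; as $\phi$ is smooth, hence flat and open, $\phi^{-1}$ commutes with closure, and the equality reduces to the corresponding statement for a versal deformation of a vector bundle on $\pp^1$ --- where $\Sigmabar_{\vec e} = \overline{\Sigma_{\vec e}}$ holds because $\O(\ee)$ deforms to $\O(\e)$ exactly when $\ee \le \e$, the defining property of the order. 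Granting this, $\Sigmabar_{\vec w_{r,\ell}}(C,f)$ is the closure of the smooth scheme $\Sigma_{\vec w_{r,\ell}}(C,f)$ of pure dimension $g - u(\vec w_{r,\ell})$, hence is itself of pure dimension $g - u(\vec w_{r,\ell})$ and generically smooth, and is nonempty precisely when $g - u(\vec w_{r,\ell}) \ge 0$.

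It then remains to match components. Since $W^r_d(C)$ is the finite union of the closed subsets $\Sigmabar_{\vec w_{r,\ell}}(C,f)$, each irreducible component $Z$ of $W^r_d(C)$ lies in some $\Sigmabar_{\vec w_{r,\ell}}(C,f)$, and being a maximal irreducible subset of the larger set $W^r_d(C)$ it is a component of $\Sigmabar_{\vec w_{r,\ell}}(C,f)$, hence generically smooth of dimension $g - u(\vec w_{r,\ell}) = \rho(g,r-\ell,d) - \ell k$ with $\ell$ in the range of Lemma~\ref{wlem}. Conversely, fix $\ell$ with $g - u(\vec w_{r,\ell}) \ge 0$ and let $V$ be a component of the nonempty $\Sigmabar_{\vec w_{r,\ell}}(C,f) = \overline{\Sigma_{\vec w_{r,\ell}}(C,f)}$; then $V$ meets $\Sigma_{\vec w_{r,\ell}}(C,f)$, which is disjoint from $\Sigmabar_{\vec w_{r,m}}(C,f)$ for every $m \ne \ell$ since $\vec w_{r,\ell} \not\le \vec w_{r,m}$, so $V$ lies in no other member of the union and is a component of $W^r_d(C)$; this produces a component of dimension $g - u(\vec w_{r,\ell})$ for each such $\ell$. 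I expect the main obstacle to be precisely the density statement $\Sigmabar_{\vec e}(C,f) = \overline{\Sigma_{\vec e}(C,f)}$: it is what upgrades the pointwise smoothness of Theorem~\ref{main} to control of every component, in particular ruling out spurious lower-dimensional components whose general member has a non-maximal splitting type; the remainder --- the expression of $W^r_d(C)$ as a union of splitting degeneracy loci, the identity $g-u(\vec w_{r,\ell})=\rho(g,r-\ell,d)-\ell k$, and the admissible range of $\ell$ --- is the combinatorial bookkeeping of \eqref{W} and Lemma~\ref{wlem}.
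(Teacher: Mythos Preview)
Your argument is correct and follows the same overall architecture as the paper: express $W^r_d(C)$ as the union of the $\Sigmabar_{\vec w_{r,\ell}}(C,f)$ via \eqref{weq} and Lemma~\ref{wlem}, invoke Theorem~\ref{main} for the dimension and smoothness of each open stratum, use the density statement $\Sigmabar_{\vec e}(C,f)=\overline{\Sigma_{\vec e}(C,f)}$, and then do the component bookkeeping. Your component-matching in the last paragraph is a bit more explicit than the paper's, but equivalent.

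The one genuine difference is how you obtain the density statement. You argue via the smoothness of the classifying map to a versal deformation of $f_*L$, deduced from the surjectivity $H^1(\O_C)\twoheadrightarrow H^1(End(f_*L))$ established in the proof of Theorem~\ref{main}; this is correct and conceptually clean. The paper instead uses the much shorter Lemma~\ref{low}: since $\Sigmabar_{\vec e}(C,f)$ is the pullback of a codimension-$u(\vec e)$ locus from the moduli stack of vector bundles on $\pp^1$, every one of its components has codimension at most $u(\vec e)$; but by Theorem~\ref{main} (and strict monotonicity $u(\ee)>u(\e)$ for $\ee<\e$) the complement $\Sigmabar_{\vec e}\setminus\Sigma_{\vec e}$ has dimension strictly less than $g-u(\vec e)$, so no component can lie entirely in the boundary. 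In other words, you already had the upper bound in your first paragraph; supplementing it with the lower bound from Lemma~\ref{low} gives density immediately, without opening up the deformation-theoretic machinery. Your route works, but Lemma~\ref{low} is the intended shortcut.
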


In other words, splitting loci explain the different dimensions of components of $W^r_d(C)$ when $C$ is a general $k$-gonal curve. 
For example, when $f\colon  C \rightarrow \pp^1$ is a general trigonal curve of genus $6$, we have $\vec{w}_{2,1} = (-4, 0, 0)$ and $\vec{w}_{2,2} = (-3, -2, 1)$, so
\[W^1_4(C) = \Sigmabar_{(-4, 0, 0)}(C, f) \cup \Sigmabar_{(-3, -2, 1)}(C, f).\]
We have $u(-4, 0, 0) = 6$ so $\dim \Sigmabar_{(-4, 0, 0)} = 0$. This corresponds to the isolated $g^1_4$ associated to $C \subset \pp^1 \times \pp^1$ as a curve of bidegree $(3, 4)$. On the other hand, $u(-3, -2, 1) = 5$ so $\dim \Sigmabar_{(-3, -2, 1)} = 1$, corresponding to the $g^1_3$ plus any base point. 

\begin{remark}
Upon completing this manuscript, the author learned that Cook-Powell--Jensen have a simultaneous and independent proof that $\Sigma_{\vec{w}_{r,\ell}}(C, f)$ has a component of the expected dimension \cite{CPJ}.
\end{remark}

This paper is organized as follows. In Section \ref{split}, we recall the splitting behavior of families of vector bundles on $\pp^1$ and describe its application to Brill-Noether splitting loci. Assuming Theorem \ref{main}, Corollary \ref{cor} follows from the combinatorial structure of splitting loci stratifications. Section \ref{dimbounds} bounds the dimension of $\Sigma_{\vec{e}}(C, f)$ from above by considering a degeneration to a chain of elliptic curves. Further analysis on this degeneration yields a proof of smoothness in Section \ref{smoothness}.
 Finally, in Section \ref{existence}, we prove existence of Brill-Noether splitting loci by showing enumerative formulas for their expected classes are non-zero.

\subsection*{Acknowledgements} This work was inspired by Geoffrey Smith, who introduced the notion of Brill-Noether splitting loci in a seminar at Stanford and asked if the author's results in \cite{L} could be applied to show their existence. I am grateful for his insight. Thanks also to Ravi Vakil, Eric Larson, Sam Payne, and Melanie Wood for fruitful discussions.  I thank Kaelin Cook-Powell and Dave Jensen for their generosity and openness in sharing their work. I am grateful to the Hertz Foundation Graduate Fellowship, NSF Graduate Research Fellowship, Maryam Mirzakhani Graduate Fellowship, and the Stanford Graduate Fellowship for their generous support.

\section{Splitting loci} \label{split}
Let $B$ be a finite type scheme over a field and $\pi: \pp^1 \times B \rightarrow B$ the projection. 
Given a vector bundle $E$ on $\pp^1 \times B$, the base $B$ is stratified by \textit{splitting loci} of $E$, defined by
\[\Sigma_{\vec{e}}(E) := \{b \in B : E|_{\pi^{-1}(b)} \cong \O(\vec{e})\}.\]
The list of integers $h^0(\pp^1, \O(\vec{e})(m))$ for all $m \in \zz$ determines the splitting type $\vec{e}$: In fact, the multiplicity of $\O_{\pp^1}(-j)$ as a summand of $E$ is equal to the second difference function evaluated at $j$ of the Hilbert function $m \mapsto h^0(\pp^1, E(m))$ (see e.g. \cite[Lemma 5.6]{ES}).
In families, uppersemicontinuity of cohomology on fibers of $\pi$ constrains which splitting types can specialize to others. Given two splitting types $\e = (e_1, \ldots, e_k)$ and $\ee = (e_1', \ldots, e_k')$, we write $\ee \leq \e$ if $e_1'+\ldots + e_j' \leq e_1+\ldots+e_j$ for all $j$. 
For each rank and degree, there is a unique maximal splitting type called the \textit{balanced} splitting type, which is characterized by the condition $|e_i - e_j| \leq 1$ for all $i, j$. We denote the \textit{balanced bundle} of rank $r$ and degree $d$ by $B(r, d)$.
We define \textit{splitting degneracy loci} by
\begin{equation} \label{u}
\overline{\Sigma}_{\vec{e}}(E) := \bigcup_{\ee \leq \e} \Sigma_{\vec{e}}(E).
\end{equation}

Recall that the \textit{expected codimension} of $\Sigma_{\vec{e}}(E)$ is
\[u(\vec{e}) := h^1(\pp^1, End(\O(\vec{e}))) = \sum_{i < j} \max \{0, e_j - e_i - 1\},\]
which is the dimension of the deformation space of $\O(\vec{e})$.
In general, $\overline{\Sigma}_{\vec{e}}(E)$ is always closed, but need not be the closure of $\Sigma_{\vec{e}}(E)$.
However, in the case that all splitting loci have the expected dimension, the following lemma shows $\overline{\Sigma}_{\vec{e}}(E)$ is the closure of $\Sigma_{\vec{e}}(E)$. Thus, no confusion should result from this notation.
\begin{Lemma} \label{low}
Let $E$ be a vector bundle on $\pi: \pp^1 \times B \rightarrow B$ with $B$ irreducible. If $\Sigmabar_{\vec{e}}(E)$ is non-empty, then every component of $\Sigmabar_{\vec{e}}(E)$ has at least the expected dimension. In particular, if all $\Sigma_{\vec{e}}(E)$ have the expected dimension, then $\overline{\Sigma}_{\vec{e}}(E)$ is the closure of $\Sigma_{\vec{e}}(E)$.
\end{Lemma}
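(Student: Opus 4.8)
The plan is to realize $\Sigmabar_{\vec e}(E)$, locally on $B$, as a degeneracy locus of maps of vector bundles whose expected codimension is $u(\vec e)$. Granting this, the classical fact that such a degeneracy locus has every irreducible component of codimension at most the expected value yields the bound; and since $B$ is irreducible, hence equidimensional, ``codimension at most $u(\vec e)$'' is exactly ``dimension at least $\dim B - u(\vec e)$'', which is the expected dimension. (Equivalently: once $\Sigmabar_{\vec e}(E)$ is known to be locally cut out by $u(\vec e)$ equations, this is Krull's principal ideal theorem.)

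To produce the degeneracy locus, I would shrink $B$ around a given point of $\Sigmabar_{\vec e}(E)$ so that for $N \gg 0$ the sheaf $E(N)$ is globally generated relative to $\pi$ with $R^1\pi_* E(N) = 0$. Then $\F := \pi_* E(N)$ is a vector bundle, $\pi^*\F(-N) \twoheadrightarrow E$, and the kernel has fibers $\O(-N-1)^{\oplus(\deg E + kN)}$ (the syzygy bundle of a globally generated bundle on $\pp^1$ is a sum of copies of $\O(-1)$), so it has the form $\pi^*\G(-N-1)$ for a vector bundle $\G$ on $B$. Twisting the sequence $0 \to \pi^*\G(-N-1) \to \pi^*\F(-N) \to E \to 0$ by $\O_{\pp^1}(m)$ and pushing down identifies, for every $m$, the sheaf $R^1\pi_*(E(m))$ with the cokernel of an explicit map of vector bundles $\psi_m \colon \G \otimes H^1(\O(m-N-1)) \to \F \otimes H^1(\O(m-N))$ on $B$. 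By the Hilbert-function description of splitting degeneracy loci recalled above, $\Sigmabar_{\vec e}(E)$ is the locus where $h^1(E|_b(m)) \ge h^1(\O(\vec e)(m))$ for all $m$, i.e. where $\rank(\psi_m)_b \le \rank\!\big(\F \otimes H^1(\O(m-N))\big) - h^1(\O(\vec e)(m))$ for all $m$. Because the $\psi_m$ are compatible via the surjections $H^1(\O(j-1)) \twoheadrightarrow H^1(\O(j))$, this whole system of rank conditions assembles into a single nested (Schubert-type) degeneracy locus, and its expected codimension is $u(\vec e)$; this is the local degeneracy-locus presentation underlying the universal formulas of \cite{L}. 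I expect this last point to be the crux: the naive estimate, summing the expected codimension $h^0(\O(\vec e)(m))\,h^1(\O(\vec e)(m))$ of the individual conditions over all $m$, over-counts, and one must use the nestedness to see the true total is $u(\vec e)$.

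For the final assertion, suppose every $\Sigma_{\ee}(E)$ has the expected dimension. Let $Z$ be an irreducible component of $\Sigmabar_{\vec e}(E)$; by the first part $\dim Z \ge \dim B - u(\vec e)$. Since $Z$ is covered by the locally closed subsets $\Sigma_{\ee}(E) \cap Z$ with $\ee \le \vec e$, one of them, say for $\ee = \ee_0$, is dense in $Z$, so $\dim B - u(\vec e) \le \dim Z = \dim\big(\Sigma_{\ee_0}(E) \cap Z\big) \le \dim \Sigma_{\ee_0}(E) = \dim B - u(\ee_0)$, whence $u(\ee_0) \le u(\vec e)$. But $u$ is strictly decreasing under going up the specialization order: if $\ee < \vec e$, then in the moduli stack of rank $k$ vector bundles on $\pp^1$ the stratum $\Sigma_{\ee}$ lies in $\overline{\Sigma_{\vec e}} \setminus \Sigma_{\vec e}$, which has dimension strictly less than $\Sigma_{\vec e}$ (open and dense in its own closure), and $\codim \Sigma_{\ee'} = u(\ee')$ there, so $u(\ee) > u(\vec e)$. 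Hence $\ee_0 = \vec e$ and $Z = \overline{\Sigma_{\vec e}(E) \cap Z} \subseteq \overline{\Sigma_{\vec e}(E)}$. Running over all components gives $\Sigmabar_{\vec e}(E) \subseteq \overline{\Sigma_{\vec e}(E)}$, and the reverse inclusion holds because $\Sigmabar_{\vec e}(E)$ is closed and contains $\Sigma_{\vec e}(E)$. (The same argument shows $\Sigmabar_{\vec e}(E) = \emptyset$ when $\Sigma_{\vec e}(E) = \emptyset$.)
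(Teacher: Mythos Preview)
Your argument is correct, but for the dimension bound you have taken a considerably longer route than the paper. The paper simply observes that $\Sigmabar_{\vec e}(E)$ is the preimage of the corresponding stratum closure under the classifying map $B \to \B$ to the moduli stack of vector bundles on $\pp^1$; since that stratum closure has codimension exactly $u(\vec e)$ in $\B$ and codimension can only decrease under pullback, the bound follows in one line. Your explicit two-term resolution and nested rank conditions are essentially an unpacking of why the stack stratum has codimension $u(\vec e)$ --- and the step you yourself flag as ``the crux'' (that the nested Schubert conditions have total expected codimension $u(\vec e)$ rather than the naive over-count) is precisely what the stack argument lets you avoid computing. Somewhat ironically, you \emph{do} invoke the moduli stack in your final paragraph to prove the strict monotonicity $u(\ee) > u(\vec e)$ for $\ee < \vec e$; had you used it for the first part as well, the whole proof would collapse to a few sentences. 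For the closure statement, your argument and the paper's are the same in substance: both rest on the observation that $\Sigmabar_{\vec e}(E) \setminus \Sigma_{\vec e}(E) = \bigcup_{\ee < \vec e} \Sigma_{\ee}(E)$ has dimension strictly less than $\dim B - u(\vec e)$, hence cannot contain a full irreducible component of $\Sigmabar_{\vec e}(E)$. The paper states this directly; your density-on-components phrasing is an equivalent reformulation.
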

\begin{proof}
Let $\E$ be the universal bundle over the moduli stack $\B$ of vector bundles on $\pp^1$ bundles. Then $\Sigmabar_{\vec{e}}(\E)$ has codimension $u(\vec{e})$ and
$\Sigmabar_{\vec{e}}(E)$ is its preimage under the induced map $B \rightarrow \B$. Codimension can only decrease under pullback so $\codim \Sigmabar_{\vec{e}}(E) \leq u(\vec{e})$. This applies on any open set of $B$, so every component of $\Sigmabar_{\vec{e}}(E)$ has at least the expected dimension.
If all splitting loci have the expected dimension, every component of $\Sigmabar_{\vec{e}}(E) \backslash \Sigma_{\vec{e}}(E)$ has dimension less than the expected dimension of $\Sigmabar_{\vec{e}}(E)$. Thus, all of $\Sigmabar_{\vec{e}}(E) \backslash \Sigma_{\vec{e}}(E)$ must lie in the closure of $ \Sigma_{\vec{e}}(E)$.
\end{proof}

We now realize the Brill-Noether splitting loci defined in the introduction as splitting loci of a vector bundle on $\pp^1 \times \Pic^d(C)$.
Let $f\colon  C \rightarrow \pp^1$ be a degree $k$, genus $g$ cover and consider the following commuting triangle
\begin{center}
\begin{tikzcd}
&C \times \Pic^d(C) \arrow{r}{f \times \mathrm{id}} \arrow{rd}[swap]{\nu} & \pp^1 \times \Pic^d(C) \arrow{d}{\pi}\\
& & \Pic^d(C).
\end{tikzcd}
\end{center}
Let $\mathcal{L}$ be a Poincar\'e line bundle on $C \times \Pic^d(C)$, that is, a line bundle with the property that $\L|_{\nu^{-1}[L]} \cong L$ (see e.g. \cite[\S IV.2]{ACGH}).
The push forward $\E := (f \times \mathrm{id})_*\L$ is a vector bundle on $\pp^1 \times \Pic^d(C)$ with the property that $\E|_{\pi^{-1}[L]} \cong f_*L$.
In other words, the Brill-Noether splitting loci defined in the introduction are the splitting loci of $\E$:
\[\Sigma_{\vec{e}}(C, f) = \Sigma_{\vec{e}}(\E) \qquad \text{and} \qquad \Sigmabar_{\vec{e}}(C, f) = \overline{\Sigma}_{\vec{e}}(\E).\]
 By Riemann-Roch, the degree of $\E$ on a fiber of $\pi$ is
\[
\deg \E|_{\pi^{-1}[L]} = \chi(C, \E|_{\pi^{-1}[L}]) - \rank \E|_{\pi^{-1}[L]} = \chi(L) - k = d - g + 1 - k.
\]

It follows from the definitions that
\begin{equation} \label{weq}
W^r_d(C) = \bigcup_{\substack{\vec{e} \\ h^0(\O(\vec{e})) \geq r+1}} \Sigma_{\vec{e}}(C, f) = \bigcup_{\substack{\vec{e} \text{ maximal}\\ h^0(\O(\vec{e})) \geq r+1}} \overline{\Sigma}_{\vec{e}}(C, f).
\end{equation}
That is, to characterize contributions of splitting loci to $W^r_d(C)$ we are interested in splitting types that are maximal with respect to the partial ordering among those satisfying $h^0(\O(\vec{e}))\geq r+1$.

\begin{Lemma} \label{wlem}
Let $d' = d - g + 1 - k$ and suppose $r > d - g$.
The maximal splitting types of rank $k$, degree $d'$ among those satisfying $h^0(\O(\vec{e}))\geq r+1$ 
are 
\[\vec{w}_{r,\ell} := B(k-r-1+\ell, d'-\ell) \oplus B(r+1-\ell, \ell)\]
for $\max\{0, r+2-k\} \leq \ell \leq r$ such that $\ell = 0$ or $\ell \leq g - d + 2r + 1 - k$. Moreover,
\[u(\vec{w}_{r, \ell}) = g - \rho(g, r-\ell, d) + \ell k.\]
\end{Lemma}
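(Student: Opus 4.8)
The plan is to work out the combinatorics of which splitting types are maximal subject to $h^0(\O(\vec{e})) \geq r+1$ directly from the description of $h^0$. Recall that for a splitting type $\vec{e} = (e_1, \ldots, e_k)$ we have $h^0(\pp^1, \O(\vec{e})) = \sum_i \max\{0, e_i + 1\}$, i.e. each part $e_i \geq 0$ contributes $e_i + 1$ and each part $e_i \leq -1$ contributes nothing. First I would observe that among splitting types of rank $k$ and fixed degree $d'$ with exactly $r+1-\ell$ nonnegative parts, the one maximizing $h^0$ (equivalently, the maximal one in the partial order that still has "all the positive degree concentrated on $r+1-\ell$ parts") is obtained by making the nonnegative parts as large and balanced as possible and the negative parts as large as possible given they stay negative — this forces them to all equal $-1$ or split balanced between two values, which is exactly $B(k - r - 1 + \ell, d' - \ell) \oplus B(r+1-\ell, \ell)$ once one checks the negative block has degree $d' - \ell$ distributed over $k - r - 1 + \ell$ parts and the nonnegative block has degree $\ell$ over $r+1-\ell$ parts. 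The key point is that $h^0(B(r+1-\ell,\ell)) = \ell + (r+1-\ell) = r+1$ provided the nonnegative block really is nonnegative, i.e. $\ell \geq 0$, and that the negative block really stays negative, i.e. $\lceil (d'-\ell)/(k-r-1+\ell)\rceil \leq -1$; translating $d' = d - g + 1 - k$ this last inequality becomes precisely $\ell \leq g - d + 2r + 1 - k$ (or the block is empty, forcing $k - r - 1 + \ell = 0$, i.e. the boundary case $\ell = r+1-k$, which together with $\ell \geq 0$ is why we need $\ell \geq \max\{0, r+2-k\}$ in the nonempty-negative-block regime, with $\ell = 0$ allowed separately as the genuinely balanced bundle $B(k, d')$ which has exactly $h^0 = r+1$ by the hypothesis $r > d - g$).

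Next I would verify these are exactly the \emph{maximal} elements: any splitting type $\vec{e}$ with $h^0(\O(\vec{e})) \geq r+1$ has some number $s$ of nonnegative parts; I claim $\vec{e} \leq \vec{w}_{r, r+1-s}$ when $s \leq r+1$, and when $s > r+1$ one can check $h^0 \geq r+1$ is automatic but then $\vec{e} \leq \vec{e}'$ for some $\vec{e}'$ with fewer nonnegative parts still satisfying the bound — concretely, moving a unit of degree from a large nonnegative part onto the most negative part is an increase in the partial order that weakly decreases $h^0$, and one does this until either $h^0$ hits $r+1$ or a part crosses from $0$ to $-1$. Running this straightening argument shows every such $\vec{e}$ is dominated by one of the $\vec{w}_{r,\ell}$, and conversely the $\vec{w}_{r,\ell}$ are pairwise incomparable (they have genuinely different numbers of nonnegative parts and neither balanced-plus-balanced shape specializes to another — this needs a short check comparing partial sums). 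The constraint $\ell \leq r$ comes from needing at least one nonnegative part (when $\ell = r+1$ there are none, so $h^0 = 0 < r+1$).

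Finally, the formula for $u(\vec{w}_{r,\ell})$: since $\vec{w}_{r,\ell} = B(k-r-1+\ell, d'-\ell) \oplus B(r+1-\ell, \ell)$ is a direct sum of two balanced bundles, and $u$ of a balanced bundle vanishes, $u(\vec{w}_{r,\ell}) = h^1(\mathrm{Hom}(B_1, B_2)) + h^1(\mathrm{Hom}(B_2, B_1))$ where $B_1, B_2$ are the two blocks; one computes $\mathrm{Hom}(B_1, B_2)$ is a (balanced) bundle of rank $(k-r-1+\ell)(r+1-\ell)$ and slope $\ell/(r+1-\ell) - (d'-\ell)/(k-r-1+\ell)$, and its $h^1$ (plus that of the dual Hom, only one of which is nonzero) works out by Riemann-Roch on $\pp^1$ to $-\chi$ of the relevant summand. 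Substituting $d' = d - g + 1 - k$ and simplifying should yield $g - \rho(g, r - \ell, d) + \ell k$; this is a routine but slightly fiddly algebra computation that I would carry out by writing $\rho(g, r-\ell, d) = g - (r-\ell+1)(g - d + r - \ell)$ and matching terms. The main obstacle is not any single step but the bookkeeping: getting the edge cases (empty negative block, the $\ell = 0$ case, the transition between $\max\{0, r+2-k\}$ and larger $\ell$) all consistent with the clean statement, and making the "straighten any $\vec{e}$ upward to a $\vec{w}_{r,\ell}$" argument genuinely airtight rather than merely plausible — in particular handling splitting types with more than $r+1$ nonnegative parts, where $h^0 \geq r+1$ holds for a reason unrelated to the shape.
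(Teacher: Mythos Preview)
Your overall strategy---decompose $\vec{e}$ into negative and nonnegative parts, straighten to a $\vec{w}_{r,\ell}$, then compute $u$ via Riemann--Roch on the cross $\mathrm{Hom}$---is exactly the paper's approach. But your maximality argument has a real error, and it causes you to misidentify where the constraint $\ell \leq g - d + 2r + 1 - k$ comes from.

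You claim the $\vec{w}_{r,\ell}$ are pairwise incomparable. This is false. Take rank $5$, degree $-4$, $r = 3$: then $\vec{w}_{3,3} = (-2,-2,-2,-1,3)$ and $\vec{w}_{3,2} = (-2,-2,-2,1,1)$, and one checks directly that $\vec{w}_{3,3} < \vec{w}_{3,2}$. The mechanism is that when the negative block $B(k-r-1+\ell, d'-\ell)$ contains a summand of degree exactly $-1$, you can raise that summand to $0$ and lower the top summand of the positive block by $1$; this produces a strictly larger splitting type which is still of the form $\vec{w}_{r,\ell-1}$ (and still has $h^0 = r+1$). So $\vec{w}_{r,\ell}$ is maximal precisely when either $\ell = 0$ or the negative block has all parts $\leq -2$. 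The latter condition, $\lceil (d'-\ell)/(k-r-1+\ell) \rceil \leq -2$, is what translates to $\ell \leq g - d + 2r + 1 - k$.

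Relatedly, the condition you wrote down for ``the negative block really stays negative,'' namely $\lceil (d'-\ell)/(k-r-1+\ell) \rceil \leq -1$, simplifies to $d' \leq r + 1 - k$, which is independent of $\ell$ and holds automatically under the hypothesis $r > d - g$. Your translation of it into $\ell \leq g - d + 2r + 1 - k$ is an algebra slip (you effectively computed the $\leq -2$ condition). Also, $\vec{w}_{r,0} = B(k-r-1, d') \oplus \O^{\oplus r+1}$, not the balanced bundle $B(k,d')$. Once you replace the incomparability claim with the correct ``no $-1$ in the negative block'' maximality check, the rest of your outline goes through and matches the paper.
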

\begin{remark}
If $r \leq d - g$ we automatically have $W^r_d(C) = \Pic^d(C)$.
As Pflueger points out in \cite[Remarks 1.6 and 3.2]{Pf},  the codimension $g - \rho(g, r-\ell, d) + \ell k$ is quadratic in $\ell$, achieving its minimum at $\ell_0 = \frac{1}{2}(g - d +2r + 1-k)$.
Our lower bound $r + 2 - k$ is the same distance from the minimum $\ell_0$ as Pflueger's upper bound $g - d + r - 1$. From this, it is not hard to see that the minimum over $\ell$ in our range is the same as Pflueger's minimum.
\end{remark}
\begin{proof}
The assumption $r > d - g$ implies $k - r - 1 + \ell < \ell - d'$ so $B(k-r-1+\ell, d'-\ell)$ consists of entirely negative summands.
Requiring that the rank of this vector bundle is positive gives our lower bound $\ell \geq r + 2 - k$.

First we show every $\vec{e}$ with $h^0(\O(\vec{e})) \geq r+1$ is less than $\vec{w}_{r,\ell}$ for some $\ell$.
We may write $\O(\vec{e}) = N \oplus P$ where $N$ consists of negative summands, and $P$ consists of nonnegative summands. If $h^0(\pp^1, P) > r + 1$, then the splitting type obtained from $\vec{e}$ by decreasing the largest summand by one and increasing the lowest summand by one is more balanced than $\vec{e}$ and still has at least $r+1$ sections. Hence, it suffices to consider the case $h^0(\pp^1, P) = r +1$. Then, $\vec{e} \leq \vec{w}_{r,\ell}$ for $\ell = \deg P$.

By construction, the only splitting types more balanced than $\vec{w}_{r,\ell}$ are obtained from $\vec{w}_{r,\ell}$ by lowering a summand in $B(r+1 - \ell, \ell)$ and raising a summand in $B(k-r-1+\ell, d'-\ell) $. This produces a splitting type with less than $r+1$ global sections unless $\ell > 0$ and $B(k-r-1+\ell, d'-\ell)$ has a summand of degree $-1$. In that case, we see $\vec{w}_{r,\ell} < \vec{w}_{r, \ell-1}$. Thus, $\vec{w}_{r,\ell}$ is maximal precisely when $\ell = 0$ or all summands of $B(k-r-1+\ell, d'-\ell)$ are degree at most $-2$. The latter means
$2(k - r - 1 + \ell) \leq \ell - d'$, which is equivalent to $\ell \leq g - d + 2r + 1 - k$.

Finally, the expected codimension of $\vec{w}_{r,\ell}$ is 
\begin{align*}
u(\vec{w}_{r,\ell}) &= h^1(\pp^1, End(\vec{w}_{r,\ell})) = h^1(\pp^1, Hom(B(r+1 - \ell, \ell), B(k - r - 1 + \ell, d' - \ell)) \\
&= - \chi(\pp^1, Hom(B(r+1 - \ell, \ell), B(k - r - 1 + \ell, d' - \ell)))\\
&= \ell(k - r - 1 + \ell) - (d' - \ell)(r+1 - \ell) - (r + 1 - \ell)(k - r - 1 + \ell) \\
&= \ell k -(r + 1 - \ell)( d -g - r + \ell). \qedhere
\end{align*}
\end{proof}

\begin{Example}
The following table lists the ``balanced plus balanced" splitting types of rank $5$ and degree $-4$ with at least $4$ global sections. The first three are maximal.
\begin{center}
\begingroup
\setlength{\tabcolsep}{8pt} % Default value: 6pt
\renewcommand{\arraystretch}{1.5}
\begin{tabular}{c||c|c|c|c}
$\ell$ & 0 & 1 & 2 & 3 \\ [2pt]
\hline
\hline
$\vec{w}_{3,\ell}$ & $(-4, 0, 0, 0, 0)$ & $(-3, -2, 0, 0, 1)$ & $(-2, -2, -2, 1, 1)$ & $(-2, -2, -2, -1, 3)$ \\ [2pt]
\hline
$u(\vec{w}_{3,\ell})$ & $12$ & $11$ & $12$ & $15$
\end{tabular}
\endgroup
\end{center}
\vspace{.1in}
Notice that $w_{3,3} < w_{3,2}$ in the partial ordering, showing necessity of the condition $\ell \leq g - d + 2r + 1 - k$ in Lemma \ref{wlem}. 
Corollary \ref{cor} says that for a general pentagonal curve, every component of $W^3_g(C)$ has dimension $g - 11$ or $g - 12$. Moreover, there is at least one component of dimension $g - 11$ and at least  two components of dimension $g - 12$ when these quantities are nonnegative.
\end{Example}

Assuming Theorem \ref{main}, Corollary \ref{cor} now follows.

\begin{proof}[Proof of Corollary \ref{cor}]
Equation \eqref{weq} and Lemma \ref{wlem} show that $W^r_d(C)$ is the union of $\Sigmabar_{\vec{w}_{r,\ell}}(C, f)$ for $\max\{0, r+2 - k\} \leq \ell \leq r$
such that $\ell = 0$ or $\ell \leq g - d + 2r + 1 - k$. Theorem \ref{main} asserts that $\Sigma_{\vec{w}_{r,\ell}}(C, f)$ is smooth of pure dimension $g - u(\vec{w}_{r,\ell})$ whenever this quantity is nonnegative, and Lemma \ref{low} guarantees that $\Sigmabar_{\vec{w}_{r,\ell}}(C, f)$ is its closure.
\end{proof}

\section{The degeneration and dimension bounds} \label{dimbounds}
In this section, we describe our degeneration to a chain of elliptic curves and prove a smoothing theorem for endomorphisms of the push forwards of line bundles. This involves explicit compatibility conditions at the nodes, in a manner similar to Eisenbud-Harris' proof of the Brill-Noether theorem \cite{EH} (see also \cite[Ch. 5]{HM} for an exposition). A noteworthy difference in the set up is that elliptic curves in the middle of our chain have more than one node, creating subtleties in how these conditions interact.
Also, instead of tracking vanishing sequences of different limit line bundles, we describe the sections that smooth from a fixed limit.

 A consequence of our analysis will be that
\begin{equation} \label{equiv}
\dim\{L \in \Pic^d(C) : h^0(\pp^1, End(f_*L)) \geq \delta+k^2\} \leq g - \delta.
\end{equation}
for all $\delta \geq 0$. 
Since $End(f_*L)$ is rank $k^2$ and degree $0$ on $\pp^1$,
\[u(f_*L) = h^1(\pp^1, End(f_*L)) =  h^0(\pp^1, End(f_*L)) - k^2,\]
so \eqref{equiv} implies $\dim \Sigma_{\vec{e}}(C, f) \leq g - u(\vec{e})$ for all $\vec{e}$.
(Notice that \eqref{equiv} does not refer to a particular splitting type!)

Basic cohomological observations determine all push forwards of line bundles from elliptic curves.
\begin{Lemma} \label{ezpush}
Let $X$ be an elliptic curve and $f\colon  X \rightarrow \pp^1$ a degree $k$ map. Let $L$ be a line bundle of degree $d = a + nk$ on $X$ with $0 \leq a < k$. We have
\[f_*L = \begin{cases}  \O_{\pp^1}(n-2) \oplus \O_{\pp^1}(n-1)^{\oplus k-2} \oplus  \O_{\pp^1}(n) &\text{if $L = f^*\O_{\pp^1}(n)$} \\  \O_{\pp^1}(n-1)^{\oplus k-a} \oplus \O_{\pp^1}(n)^{\oplus a} &\text{otherwise.} \end{cases}\]
 \end{Lemma}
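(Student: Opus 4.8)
\textbf{Proof proposal for Lemma \ref{ezpush}.}

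The plan is to determine $f_*L$ by computing its cohomology after twisting, using the fact recalled in Section \ref{split} that the multiplicity of $\O_{\pp^1}(-j)$ in a bundle $E$ on $\pp^1$ is the second difference at $j$ of $m \mapsto h^0(\pp^1, E(m))$; equivalently, $f_*L$ is determined by knowing $h^0(\pp^1, f_*L(m)) = h^0(X, L \otimes f^*\O_{\pp^1}(m))$ for all $m \in \zz$. First I would record the numerics: by Riemann-Roch on $\pp^1$, or directly from $\chi$, $\deg f_*L = \chi(X,L) - k = d - k$, and $\rank f_*L = k$. So if $f_*L \cong \O(\vec e)$ then $\sum e_i = d - k = a + (n-1)k + (a - a)$, i.e. $\sum e_i = a + nk - k$; for the ``generic'' answer $\O(n-1)^{k-a} \oplus \O(n)^a$ the degree is $(k-a)(n-1) + an = k(n-1) + a = a + nk - k$, which checks out, and likewise for the pullback case $(n-2) + (k-2)(n-1) + n = k(n-1) = nk - k$ (consistent with $a=0$).

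The main step is the cohomology count. Twisting $L$ by $f^*\O_{\pp^1}(m)$ changes its degree by $mk$, so $\deg(L \otimes f^*\O(m)) = a + (n+m)k$. On an elliptic curve $X$, a line bundle $M$ of degree $e$ satisfies $h^0(X, M) = e$ if $e > 0$, $h^0(X,M) \le 1$ with equality iff $M \cong \O_X$ if $e = 0$, and $h^0 = 0$ if $e < 0$. Therefore $h^0(X, L \otimes f^*\O(m))$ equals $a + (n+m)k$ once $n + m \ge 1$ (degree positive, assuming $a$ arbitrary — note if $a=0$ and $n+m=1$ the degree is exactly $k>0$ so still $h^0 = k$), equals $0$ once $n+m \le -1$, and the only subtle value is $m = -n$, where the degree is exactly $a$. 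If $a > 0$ this is positive so $h^0 = a$, and the Hilbert function $m \mapsto h^0(\pp^1, f_*L(m))$ is $\ldots, 0, a, a + k, a + 2k, \ldots$ (jump from $0$ to $a$ at $m = -n$, then constant increments of $k$); its second difference is supported at $j$ with $-j = n-1$ (multiplicity $k - a$) and $-j = n$ (multiplicity $a$), giving $\O(n-1)^{k-a}\oplus\O(n)^a$. If $a = 0$, the degree-$a=0$ case splits: $h^0(X, L \otimes f^*\O(-n)) = h^0(X, L \otimes f^*\O(-n))$ which is $1$ if $L \cong f^*\O(n)$ and $0$ otherwise. When it is $0$, the Hilbert function is $\ldots,0,0,k,2k,\ldots$ (jump from $0$ to $k$ at $m = -n+1$), second difference gives $\O(n-1)^k$, i.e. the $a = 0$ instance of the generic formula. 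When it is $1$, the Hilbert function is $\ldots, 0, 1, k, 2k, \ldots$; its first differences are $\ldots,0,1,k-1,k,k,\ldots$ and second differences are $\ldots, 0, 1, k-2, 1, 0, \ldots$, placing one $\O(n)$, $(k-2)$ copies of $\O(n-1)$, and one $\O(n-2)$, which is exactly the pullback case.

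The only genuine obstacle is making the degree-$0$ dichotomy on the elliptic curve airtight: I must justify that $h^0(X, L \otimes f^*\O(-n)) \ne 0$ forces $L \otimes f^*\O(-n) \cong \O_X$, hence $L \cong f^*\O_{\pp^1}(n)$, and conversely. This is standard — a degree-zero line bundle on a smooth projective curve has a nonzero section iff it is trivial — so there is no real difficulty, just care. I would also remark that the answer is consistent with semicontinuity: the pullback bundle $\O(n-2)\oplus\O(n-1)^{k-2}\oplus\O(n)$ is a degeneration of the balanced $\O(n-1)^k$, as it must be since the pullback locus is a single point (or rather a translate of the trivial-bundle locus) inside $\Pic^{nk}(X)$. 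Finally I would note the edge case $k = 2$ is excluded by the running hypothesis $k > 2$, though the pullback formula still makes literal sense there with the middle term $\O(n-1)^{\oplus 0}$ absent.
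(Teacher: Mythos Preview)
Your proof is correct and follows essentially the same approach as the paper: both determine the splitting type of $f_*L$ from the cohomology of its twists, using that $h^0(\pp^1, f_*L(m)) = h^0(X, L \otimes f^*\O_{\pp^1}(m))$. The only difference is cosmetic: the paper first reduces to $n = 0$ via the projection formula and then argues with a minimal set of twists (using $h^1(L) = 0$ to bound summands from below), whereas you compute the full Hilbert function directly and read off the second differences.
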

\begin{proof}
Let $H = f^*\O_{\pp^1}(1)$. By the projection formula, $f_* L = \O_{\pp^1}(n) \otimes f_* L(-nH)$, so it suffices to consider the case $n = 0$.
First observe that $h^0(X,\O_X) = h^1(X,\O_X) = 1$. The only rank $k$ vector bundle on $\pp^1$ with this cohomology is $\O_{\pp^1}(-2) \oplus \O_{\pp^1}(-1)^{\oplus k-2} \oplus \O_{\pp^1}$ so this must be $f_*\O_X$, completing the first case.
Now suppose $L$ is non-trivial of degree $0 \leq a < k$. By Serre duality, $h^1(X,L) = h^0(X,L^\vee) = 0$, implying all summands of $f_*L$ are degree at least $-1$. Riemann-Roch shows $h^0(X,L) = a$ and moreover, $h^0(X,L(-H)) = 0$, because in this case $\deg L(-H) < 0$. It follows that $f_*L = \O(-1)^{\oplus k-a} \oplus \O_{\pp^1}^{\oplus a}$, completing the second case.
\end{proof}

We now describe our degeneration. Let $\X_0$ be a simply nodal chain of elliptic curves $X_1, \ldots, X_g$ with $X_i$ joined to $X_{i+1}$ at a point $p_i$.
For each $i$, let $f_i: X_i \rightarrow \pp^1$ be a degree $k$ map that is totally ramified at $p_{i-1}$ and $p_i$. 
Note then that $p_{i-1}$ and $p_{i}$ differ by a $k$-torsion element on $X_i$.
Together, the $f_i$ define a map of $\X_0$ to a simply nodal chain $\P_0$ of $\pp^1$'s, labeled $P_1, \ldots, P_g$, with nodes $f_i(p_i) = f_{i+1}(p_i)$ for $1 \leq i \leq g-1$.

\vspace{.15in}
\begin{center}
\begin{tikzpicture}
\draw (0,0) ellipse (1cm and .5cm);
 \draw (.4,.07) arc(-10:-170:.4cm and .25cm);
 \draw (.33,-.038) arc(25:158:.35cm and .2 cm);
\draw (-1, 0) node {\small $\bullet$};
\draw (1, .5) node {\small $p_{i+1}$};
\draw (-1, .5) node {\small $p_i$};
\draw (-3, .5) node {\small $p_{i-1}$};
\draw (-7, 0) node {$\X_0$};
\draw (-7, -3) node {$\P_0$};
\draw [->] (-7, -.85) -- (-7, -2);
%\draw (-7-.2, -1 -.425) node {$\f$};
\draw (-2, .95) node{$X_i$};
\draw (0, .95) node{$X_{i+1}$};
\draw [->] (-2, -.85) -- (-2, -2);
\draw (-1.7, -1 -.425) node {$f_i$};
\draw [->] (-2+2, -.85) -- (-2+2, -2);
\draw (-1.7+2.15, -1 -.425) node {$f_{i+1}$};
\draw (-2, -4.2) node{$P_i$};
\draw (0, -4.2) node{$P_{i+1}$};
\draw (2,0) ellipse (1cm and .5cm);
 \draw (2.4,.07) arc(-10:-170:.4cm and .25cm);
 \draw (2.33,-.038) arc(25:158:.35cm and .2 cm);
\draw (1, 0) node {\small $\bullet$};
\draw (1, -3) node {\small $\bullet$};
\draw (-1, -3) node {\small $\bullet$};
\draw (-3, -3) node {\small $\bullet$};
\draw (-2,0) ellipse (1cm and .5cm);
 \draw (.4 -2,.07) arc(-10:-170:.4cm and .25cm);
 \draw (.33-2,-.038) arc(25:158:.35cm and .2 cm);
 \draw (-4,0) ellipse (1cm and .5cm);
 \draw (.4 -4,.07) arc(-10:-170:.4cm and .25cm);
 \draw (.33-4,-.038) arc(25:158:.35cm and .2 cm);
\draw (-3, 0) node {\small $\bullet$};
\draw (-5.5, 0) node {$\cdots$};
\draw (3.5, 0) node {$\cdots$};
\draw (-5.5, -3) node {$\cdots$};
\draw (3.5, -3) node {$\cdots$};
\draw (0, -3) ellipse (1cm and .75cm);
 \draw (1,-3) arc(0:-180:1cm and .25cm);
  \draw[dashed] (1,-3) arc(0:180:1cm and .25cm);
  \draw (2, -3) ellipse (1cm and .75cm);
 \draw (1+2,-3) arc(0:-180:1cm and .25cm);
  \draw[dashed] (1+2,-3) arc(0:180:1cm and .25cm);
    \draw (-2, -3) ellipse (1cm and .75cm);
 \draw (1-4,-3) arc(0:-180:1cm and .25cm);
  \draw[dashed] (1-4,-3) arc(0:180:1cm and .25cm);
   \draw (-4, -3) ellipse (1cm and .75cm);
    \draw (1-2,-3) arc(0:-180:1cm and .25cm);
  \draw[dashed] (1-2,-3) arc(0:180:1cm and .25cm);
\end{tikzpicture}
\end{center}
By the theory of admissible covers \cite{admiss}, the nodes smooth to obtain a family $\f\colon  \mathcal{X} \rightarrow \mathcal{P}$, flat over a pointed smooth curve $(\Delta, 0)$, where the general fiber is a smooth curve of genus $g$ mapping to $\pp^1$ and the central fiber is the map $\X_0 \rightarrow \P_0$ restricting to $f_i$ on each component $X_i$. (For a treatment in positive characteristic see Section 5 of \cite{liu}.)
 By slight abuse of notation, we also write $\f$ for this map on the central fiber.
 We write $\tilde{\X} \rightarrow \tilde{\P}$ for the family over the punctured curve $\Delta \backslash \{0\}$.

The curve $\X_0$ is compact type. In particular, given a degree $d$ line bundle $\tilde{\L}$ on $\tilde{\X}$ and 
 partition $d = d_1 + \ldots + d_g$, there is a unique extension $\mathcal{L}$ to $\X$ so that the limit $\L_0 = \L|_{\X_0}$ restricts to a degree $d_i$ line bundle $L_i$ on $X_i$.  We wish to bound $h^0(\P_t, End(\f_*\L_t))$ for general $t$. To do so, we study the subspace of $V_\L \subset H^0(\P_0, End(\f_*\L_0))$ of sections which arise as limits of sections in $H^0(\P_t, End(\f_*\L_t))$ as $t \rightarrow 0$.

 For simplicity, let us fix a partition $d_1 + \ldots + d_g = d$ and
 write 
 \[\Pic^d(X) := \Pic^{d_1}(X_1) \times \Pic^{d_2}(X_2) \times \cdots \times \Pic^{d_g}(X_g).\]  
 Let $\alpha_i : X_i \rightarrow \X_0$ denote the inclusion.
 Given $\L_0 = (L_1, \ldots, L_g) \in \Pic^d(\X_0)$, we have a short exact sequence on $\X_0$
 \[0 \rightarrow \L_0 \rightarrow \bigoplus_{i=1}^g (\alpha_i)_* L_i \rightarrow \bigoplus_{i=1}^{g-1} \O_{p_i} \rightarrow 0.\] 
Let $\beta_i : P_i \rightarrow \P_0$ be the inclusion.
For each $i$, let $E_i = (f_i)_* L_i$.
Applying $\f_*$ to the above, we obtain an exact sequence on $\P_0$
 \begin{equation} \label{flseq}
 0 \rightarrow \f_*\L_0 \rightarrow \bigoplus_{i=1}^g (\beta_i)_*E_i \rightarrow \bigoplus_{i=1}^g \O_{f(p_i)} \rightarrow 0.
 \end{equation}
\begin{warning}
The map $\f|_{\X_0}: \X_0 \rightarrow \P_0$ is not flat and $\f_*\L_0$ is \text{not} locally free at the nodes $\f(p_i)$. Nevertheless, our family is flat over the curve $\Delta$, so $\f_*\L_0 = (\f_*\L)|_{\P_0}$ is the limit we wish to consider.
\end{warning}
The restriction of $\f_*\L_0$ to each component $P_i$ has an isomorphism
\begin{equation} \label{compiso}
(\f_*\L_0)|_{P_i} \cong E_i \oplus \O_{\f(p_{i-1})}^{\oplus k-1} \oplus \O_{\f(p_i)}^{\oplus k-1}.
\end{equation}
In what follows, we will write $L_i|_{kp}$ for $L_i|_{f_i^{-1}(f_i(p))}$.
Above, $\O_{\f(p_{i-1})}^{\oplus k-1}$ is identified with the subspace of $H^0(L_{i-1}|_{kp_{i-1}})$ of functions vanishing at $p_{i-1}$ and 
$\O_{\f(p_{i})}^{\oplus k-1}$ is identified with the subspace of $H^0(L_{i+1}|_{kp_i})$ of functions vanishing at $p_{i}$.
The splitting of the middle term is defined by the map sending a section $\sigma$ to $\sigma|_{kp_{i-1}} - \sigma(p_{i-1}) \in H^0(L_{i-1}|_{kp_{i-1}})$. We think of the $k-1$ factors $\O_{f(p_{i-1})}^{\oplus k-1}$ as remembering the values of the first $k-1$ derivatives of $\sigma$ at $p_{i-1}$ along $X_{i-1}$, and similarly the $k-1$ factors $\O_{f(p_{i})}$ as remembering the values of the first $k-1$ derivatives of $\sigma$ at $p_{i}$ along $X_{i+1}$.

Applying $Hom(\f_*\L_0, -)$ to \eqref{flseq} and using \eqref{compiso}, we obtain an injection of sheaves
\[End(\f_*\L_0) \hookrightarrow \bigoplus_{i = 1}^g Hom(\f_*\L_0, (\beta_i)_*E_i) \cong \bigoplus_{i=1}^g Hom((\f_*\L_0)|_{P_i}, E_i) \cong \bigoplus_{i=1}^g End(E_i).\]
The last isomorphism follows because there are no non-zero maps from the torsion summands to a locally free sheaf.
Taking global sections yields an inclusion
\[ \iota: H^0(\P_0, End(\f_*\L_0)) \hookrightarrow \bigoplus_{i=1}^g H^0(P_i, End(E_i)).\]
We want to describe the image under $\iota$ of the subspace $V_\L \subset H^0(\P_0, End(\f_*\L_0))$ of sections that arise as limits from smooth curves. One necessary condition on each factor is described in the following definition. In what follows $\cc$, denotes the ground field, which is algebraically closed of any characteristic.
\begin{Definition}
Let $L$ be a line bundle on an elliptic curve with a degree $k$ map $f\colon  X \rightarrow \pp^1$ and let $E = f_* L$. Given a point $p$ of total ramification of $f$,
we say $\phi \in H^0(\pp^1, End(E))$ is \textit{order preserving at $p$} if $\ord_p(\phi(\sigma)) \geq \ord_p \sigma$ for all $\sigma \in H^0(U, L)$ for any $U \ni p$. Equivalently, the restriction $\res \phi \in \End(H^0(L|_{kp})) \cong \End(\cc[x]/(x^k))$ is lower triangular with respect to the basis $1, x, x^2, \ldots, x^{k-1}$. 
Note that the diagonal entries $d_{p}^{(j)}(\phi) := (\res \phi)(x^j)/x^j|_{x=0}$ are independent of choice of local coordinate $x$.
\end{Definition}

We now describe agreement conditions near the nodes that are satisfied by every element of $\iota(V_\L)$.
 \begin{Lemma} \label{cond}
Given $\L_0 = (L_1, \ldots, L_g) \in \Pic^d(\X_0)$, let $\L$ be a line bundle on $\X$ such that $\L|_{\X_0} = \L_0$.
Let $V_\L \subset H^0(\P_0, End(\f_*\L_0))$ be the subspace of sections that can be extended to $H^0(\P, End(\f_*\L))$.
If $(\phi_1, \ldots, \phi_g) \in \iota(V_\L)$ then the following conditions hold for each $i=1, \ldots, g-1$:
\begin{enumerate}
\item \label{x} $\phi_i$ is order preserving at $p_i$
\item \label{y} $\phi_{i+1}$ is order preserving at $p_i$
\item \label{d} We have $d_{p_i}^{(0)}(\phi_i) = d_{p_i}^{(0)}(\phi_{i+1})$ and $d_{p_i}^{(j)}(\phi_i) = d_{p_i}^{(k-j)}(\phi_{i+1})$ for $j = 1, \ldots, k-1$.
 \end{enumerate}
 \end{Lemma}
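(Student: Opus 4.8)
The plan is to analyze what happens to a section $\phi \in H^0(\P_t, End(\f_*\L_t))$ as $t \to 0$ by working on the normalization $\tilde{\P}_0 = \bigsqcup P_i$ of the central fiber, where $\phi$ restricts to $(\phi_1, \ldots, \phi_g) = \iota(\phi_0)$ with $\phi_i \in H^0(P_i, End(E_i))$. The three conditions are really local statements near a single node $\f(p_i)$, so I would fix $i$ and work in a formal or \'etale neighborhood of that node, where the total family $\mathcal{X} \to \mathcal{P}$ looks like (a degree $k$ totally ramified cover of) the standard smoothing $\spec \cc\ll x, y, t\rr/(xy - t)$ of a node. Here $X_{i-1}$ contributes the branch $y = 0$ (local coordinate $x$ downstairs, with a totally ramified uniformizer $s$ with $s^k \sim x$ upstairs) and $X_{i+1}$ the branch $x = 0$ (coordinate $y$, uniformizer $s'$ with $s'^k \sim y$). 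On the general fiber, $xy = t$ means $x$ and $y$ are inverses up to the unit $t$.

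First I would unwind the gluing: a section $\sigma$ of $\L$ near $p_i$ is the data of a section of $L_i$ along one branch and $L_{i+1}$ along the other agreeing at the node. Since $f$ is totally ramified at $p_i$, the stalk of $L_i$ at $p_i$ is $\cc\ll s\rr$ and that of $f_i{}_*L_i$ at $\f(p_i)$ is $\cc\ll x\rr\langle 1, s, \ldots, s^{k-1}\rangle$; the torsion summands in \eqref{compiso} record the coefficients of $s, \ldots, s^{k-1}$, i.e. the higher derivatives along the branch. Now $\phi$ must carry sections of $\L$ to sections of $\L$ on the general fiber; passing to the limit and reading off on each branch gives $\phi_i$ and $\phi_{i+1}$. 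To get \eqref{x}, I would observe that a section of $\L$ on the total family that vanishes to order $\geq m$ along $X_{i-1}$ at $p_i$ (i.e. is divisible by $s^m$ there) is carried by $\phi$ to another section of $\L$; its restriction to $X_{i-1}$ being $\phi_i(\sigma)$, and since $\phi$ is defined over the whole disk, the valuation along $s$ cannot drop — otherwise clearing denominators in $t$ via $x = t/y$ would force a pole on the other branch. This is exactly order-preservation of $\phi_i$ at $p_i$; \eqref{y} is the symmetric statement for $\phi_{i+1}$ at $p_i$ using the uniformizer $s'$.

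For \eqref{d}, the key computation is to track how the diagonal entries $d^{(j)}$ transform under the coordinate change $s \leftrightarrow s'$ across the node. On the general fiber near the node upstairs we have $s s' \sim t^{1/k}$ (up to units), so $s' \sim t^{1/k} s^{-1}$; an eigen-like behavior $\phi(s^j) \equiv d^{(j)}_i s^j \pmod{s^{j+1}}$ along one branch becomes, after substituting and matching the leading term on the other branch, $\phi(s'^{k-j}) \equiv d^{(j)}_i s'^{k-j} \pmod{s'^{k-j+1}}$ — because $s^j \leftrightarrow t^{j/k} s'^{-j}$ and pairing a section with its $\omega$-dual at the node swaps the roles of order $j$ and order $k-j$ (the scroll picture: the dualizing sheaf contributes the shift by $k$ at a totally ramified point). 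The constant term ($j = 0$) is genuinely a section value at the node, which is continuous across the family, giving $d^{(0)}_{p_i}(\phi_i) = d^{(0)}_{p_i}(\phi_{i+1})$ with no swap. I would make this precise by choosing compatible local generators and writing $\res\phi_i$, $\res\phi_{i+1}$ as the triangular matrices from the Definition, then checking the antidiagonal matching of their diagonals is forced by the requirement that $\phi$ extends over $t = 0$.

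The main obstacle I anticipate is bookkeeping the torsion-sheaf gluing in \eqref{compiso} precisely enough to see \eqref{d}: the splitting of the middle term of \eqref{flseq} is defined asymmetrically (by $\sigma \mapsto \sigma|_{kp_{i-1}} - \sigma(p_{i-1})$), and one must be careful that the "first $k-1$ derivatives along $X_{i-1}$" seen from $P_i$ and the "first $k-1$ derivatives along $X_{i+1}$" seen from $P_{i+1}$ are being compared via the correct identification coming from the node in $\mathcal{P}$ — this is where the order-reversal $j \leftrightarrow k-j$ enters and is easy to get backwards. A clean way to organize it is to phrase everything in terms of $\res \phi \in \End(\cc[x]/(x^k))$ and the perfect pairing $\cc[x]/(x^k) \times \cc[x]/(x^k) \to \cc$ given by $\langle x^a, x^b\rangle = \delta_{a+b, k-1}$ (the residue pairing, matching the dualizing sheaf), under which order-preserving (lower triangular) operators on one side correspond to order-preserving operators on the other, with diagonal entry in position $j$ going to diagonal entry in position $k-1-j$; reconciling the two natural indexings then yields precisely \eqref{d}.
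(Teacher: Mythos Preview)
Your overall strategy is the paper's: work formally near the node, view a section of $End(\f_*\L)$ there as a module endomorphism of the upstairs ring over the downstairs ring, and read off (1)--(3) from how that endomorphism interacts with the two branches. Two points deserve correction.

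First, your coordinate setup is inverted relative to what makes the computation clean. You put $xy=t$ on $\P$ and then need $ss'\sim t^{1/k}$ upstairs, which is not an element of any ring in sight without a base change. The paper instead takes the deformation parameter upstairs: $\widehat{\O}_{\X,p_i}=\cc\ll x,y\rr/(xy-t)$ and $\widehat{\O}_{\P,\f(p_i)}=\cc\ll a,b\rr/(ab-t^k)$ with $a\mapsto x^k u^{-1}$, $b\mapsto y^k u$. Then a section of $End(\f_*\L)$ is literally an $\cc\ll a,b,t\rr/(ab-t^k)$-module endomorphism $\psi$ of $\cc\ll x,y\rr/(xy-t)$, and linearity over $b$ gives the single identity
\[
y^k u\cdot \psi(x^j)=\psi(b\cdot x^j)=\psi\bigl(y^{k-j}u\cdot t^j\bigr)=x^jy^j\,\psi(y^{k-j}u),
\]
from which $x^j\mid\psi(x^j)$ (order preservation) and, after dividing by $x^jy^k$ and setting $x=y=0$, the matching $d^{(j)}_{p_i}(\phi_i)=d^{(k-j)}_{p_i}(\phi_{i+1})$ drop out directly. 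Your ``pole on the other branch'' heuristic and your substitution $s^j\leftrightarrow s'^{k-j}$ are exactly this computation in disguise; just set it up so no $k$th roots appear. (Also, a small indexing slip: the two components meeting at $p_i$ are $X_i$ and $X_{i+1}$, not $X_{i-1}$ and $X_{i+1}$.)

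Second, your proposed ``clean'' reorganization via the residue pairing $\langle x^a,x^b\rangle=\delta_{a+b,k-1}$ is not the right mechanism and would lead you to the wrong permutation. Passing to adjoints under that pairing sends diagonal position $j$ to $k-1-j$, whereas the lemma requires $j\leftrightarrow k-j$ for $j\geq 1$ with $0$ fixed; these are genuinely different involutions (already for $k=3$). The relation between $\phi_i$ and $\phi_{i+1}$ is not a duality: they are restrictions of the \emph{same} module endomorphism $\psi$ to the two branches, and the $j\leftrightarrow k-j$ comes from the identity $b\cdot x^j=t^j\cdot y^{k-j}u$ in the ring, not from any pairing. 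Drop the residue-pairing paragraph and keep your direct substitution argument, rewritten in the integral coordinates above.
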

 
 \begin{proof}
 It suffices to work locally around $p_i$. 
 Let $\cc$ be the ground field, which is algebraically closed of any characteristic.
 We may choose formal local coordinates $x, y, t$ near $p_i$ so that $\widehat{\O}_{\X, p_i} = \cc\ll x, y \rr /(xy - t)$ and $\widehat{\O}_{\P, \f(p_i)} = \cc\ll a, b \rr /(ab - t^k)$ and the map $\f$ is described locally by a map $\cc\ll a, b, t \rr /(ab - t^k) \rightarrow \cc\ll x, y, t \rr/(xy - t)$ such that $a \mapsto x^k u^{-1}$ and $b \mapsto y^k u$ for $u$ a power series in $x, y$ with constant coefficient $1$. (If the characteristic of $\cc$ does not divide $k$, we can extract a $k$th root of $u$ and absorb it into $x$ and $y$, and thereby assume $u  = 1$.)
 
 Since $\L$ is locally free, a section of $End(\f_*\L)$ is given locally near $\f(p_i)$ by an an endomorphism $\psi$ of $\cc\ll x, y, t \rr/(xy - t)$ viewed as an $\cc\ll a, b, t \rr /(ab - t^k) $ module. 
On the central fiber, the monomials $1, x, x^2, \ldots, x^{k-1}, y, y^2, \ldots, y^{k-1}$ generate $\cc\ll x, y \rr/(xy)$ as a module over $\cc\ll a, b \rr /(ab)$. By Nakayama's lemma, these monomials generate $\cc\ll x, y, t \rr/(xy - t)$ as an $\cc \ll a, b, t \rr /(ab - t^k)$ module. Because $\psi$ is a module homomorphism, we have
 \begin{align}
  (y^ku) \cdot \psi(x^j) &= b \cdot \psi(x^j) = \psi(b \cdot x^j) = \psi(y^{k-j} \cdot u \cdot (y^jx^j)) \notag \\
  &= \psi(y^{k-j} \cdot u \cdot t^j) = \psi(y^{k-j} \cdot u) \cdot t^j = \psi(y^{k-j} \cdot u) \cdot x^jy^j. \label{calc}
  \end{align}
Hence, $x^j$ divides $\psi(x^j)$. A similar argument shows that $y^{j}$ divides $\psi(y^{j})$. Thus, $\psi$ is order-preserving, so conditions \eqref{x} and \eqref{y} are satisfied.
Moreover, since $xy = t$, we see that $x^iy^j$ divides 
\[\psi(x^i y^j) = \begin{cases} t^{i} \psi(y^{j -i})  =  x^{i} y^i \psi(y^{j -i}) & \text{if $i \leq j$} \\ t^j \psi(x^{i - j}) =  x^jy^j \psi(x^{i - j}) &\text{if $j \leq i$} \end{cases}\]
for all $i, j$. Since $u = 1 + (x, y)$, it follows that $\psi(y^{k-j} \cdot u) = \psi(y^{k-j}) + y^{k-j} \cdot (x, y)$.
Dividing both sides of $\eqref{calc}$ by $x^j y^{k}$, we see that
 \[u \cdot \frac{\psi(x^j)}{x^j} = \frac{\psi(y^{k-j} \cdot u)}{y^{k-j}} = \frac{\psi(y^{k-j})}{y^{k-j}} + (x, y).\]
When $j = 1 , \ldots, k-1$, setting $x = y = 0$ in the equation above establishes part (3). (The case $d^{(0)}_{p_i}(\phi_i) = d_{p_i}^{(0)}(\phi_{i+1})$ follows from the fact that both are equal to the constant term of $\psi(1)$.)
It follows that any collection $(\phi_1, \ldots, \phi_g)$ which arrises as a limit of a section defined on smooth curves must satisfy these local compatibility properties near the nodes.
   \end{proof}
 
 Notice that conditions \eqref{x} and \eqref{y} of Lemma \ref{cond}  each represent $k(k-1)/2$ linear conditions on $\phi_i$ and $\phi_{i+1}$. Condition \eqref{d} represents another $k$ linear conditions on $\phi_i$ and $\phi_{i+1}$, for a total of $k^2$ possible linear conditions near each node. Our next task is to show that these conditions are all independent for general $(L_1, \ldots, L_g)$, and bound the dimension of the subvarieties inside $\Pic^d(\X_0)$ where they fail to be independent by a certain amount.
 The key technical lemma is to establish when the constraints on $\phi_i \in H^0(P_i, End(E_i))$ coming from the two different nodes $p_i$ and $p_{i+1}$ are independent.
   
 \begin{Lemma} \label{zing}
Suppose we have an elliptic curve with a degree $k$ map $f\colon  X \rightarrow \pp^1$ which is totally ramified over two distinct points $p, q \in X$. Let $L$ be a line bundle on $X$ which is not isomorphic to $f^*\O_{\pp^1}(n)$ for any $n$, and set $E = f_*L$. 
 Let $W_p \subset H^0(\pp^1,End(E))$ (respectively $W_q \subset H^0(\pp^1, End(E))$) denote the subspace of sections which are order preserving at $p$ (respectively $q$).
 Then, 
 \[\dim W_q = \begin{cases} \frac{k(k+1)}{2} + 1 & \text{if $L \cong \O_X(mq)$ for some $m$} \\ \frac{k(k+1)}{2} & \text{otherwise}\end{cases}\]
 and
 \[\dim W_p \cap W_q = \begin{cases} k+1 & \text{if $L \cong \O_X(n p+ m q)$ for some $m, n$} \\ k & \text{otherwise}. \end{cases} \]
 Moreover, the map $\diag: W_p \cap W_q \rightarrow \cc^{\oplus k}$ given by
 \begin{equation} \label{map}
 \phi \mapsto \left(d_{p}^{(0)}(\phi), \ldots, d_{p}^{(k-1)}(\phi)\right)
 \end{equation}
 is surjective. Finally, if $\phi \in \ker(\diag) \cap W_p \cap W_q$, then $\phi$ can be represented by a matrix with at most one non-zero entry.
 \end{Lemma}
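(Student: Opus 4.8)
The plan is to reduce the entire statement to cohomology computations for elementary (Hecke) modifications of $\End(E)$ on $\pp^1$, the point being that ``order preserving at a totally ramified point'' is a condition on the value of $\phi$ in a single fibre. First, since tensoring $L$ by $f^*\O_{\pp^1}(n)$ changes neither $\End(E)$ nor the order--preserving conditions nor the hypotheses, I would replace $L$ by $L\otimes f^*\O\bigl(-\lfloor \deg L/k\rfloor\bigr)$ and invoke Lemma \ref{ezpush} to assume $E=\O_{\pp^1}(-1)^{\oplus k-a}\oplus\O_{\pp^1}^{\oplus a}$ with $0\le a<k$; in particular $E$ is balanced, so $H^1(\pp^1,\End E)=0$. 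Put $0:=f(p)$, $\infty:=f(q)$ (distinct, since $p\ne q$ are both totally ramified). Because $f$ is totally ramified at $p$, the fibre $E|_0=L|_{kp}$ carries the \emph{complete} order flag $F_0^\bullet$, and $\phi$ is order preserving at $p$ exactly when $\phi(0)$ lies in the Borel $\mathfrak b_0:=\operatorname{Stab}(F_0^\bullet)$; likewise at $q$. Thus $W_p=H^0(\G_0)$, $W_q=H^0(\G_\infty)$, $W_p\cap W_q=H^0(\G_{0\infty})$, where $\G_0\subset\End(E)$ is the subsheaf of sections whose fibre at $0$ lies in $\mathfrak b_0$ and $\G_{0\infty}=\G_0\cap\G_\infty$. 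These are locally free of rank $k^2$ and degrees $-\binom k2$, $-\binom k2$, $-k(k-1)$, so Riemann--Roch on $\pp^1$ together with $H^1(\End E)=0$ gives $\dim W_p=\binom{k+1}2+h^1(\G_0)$ and $\dim(W_p\cap W_q)=k+h^1(\G_{0\infty})$; the problem becomes computing these $h^1$'s.

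For $\dim W_q$: from $0\to\G_\infty\to\End(E)\to\O_\infty^{\oplus\binom k2}\to 0$ and $H^1(\End E)=0$, the group $h^1(\G_\infty)$ is the cokernel of $H^0(\End E)\xrightarrow{\operatorname{ev}_\infty}\End(E|_\infty)\to\End(E|_\infty)/\mathfrak b_\infty$. Since every $\phi\in\End(E)$ preserves the subsheaf $\O^{\oplus a}\subseteq E$ generated by global sections, $\operatorname{ev}_\infty$ lands in the parabolic $P(U)$ stabilising $U:=\operatorname{im}(H^0(X,L)\to L|_{kq})$ (a subspace of dimension $a=h^0(L)$), and a dimension count shows its image is all of $P(U)$. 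Hence $\dim W_q=a(k-a)+\dim\bigl(P(U)\cap\mathfrak b_\infty\bigr)$, depending only on the relative position of $U$ and $F_\infty^\bullet$, recorded by the numbers $\dim(U\cap F_\infty^j)=h^0(X,L(-jq))$. On a genus--one curve $h^0(L(-jq))=\max\{0,a-j\}$ for every $j$ \emph{unless} $j=a$ and $L(-aq)\cong\O_X$, i.e.\ $L\cong\O_X(aq)$, which after the normalising twist is the same as $L\cong\O_X(mq)$ for some $m$. So this relative position is the generic (transverse) one except for this single codimension--one degeneration, which raises $\dim(P(U)\cap\mathfrak b_\infty)$ --- hence $\dim W_q$ --- by exactly $1$. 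The identical argument at $p$ gives $\dim W_p=\binom{k+1}2$ unless $L\cong\O_X(np)$.

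The surjectivity of $\diag$ I would get from a characteristic--polynomial argument. For $\phi\in H^0(\pp^1,\End E)$ the coefficients of $\det(\lambda\cdot\mathrm{id}-\phi)$ are $\operatorname{tr}(\Lambda^i\phi)\in H^0(\pp^1,\O_{\pp^1})=\cc$, hence constant, so $\phi(0)$ and $\phi(\infty)$ have the same characteristic polynomial. Now $\ker\diag=\{\phi\in W_p\cap W_q:\phi(0)\in\mathfrak n_0\}$, where $\mathfrak n_0$ is the nilradical of $\mathfrak b_0$; if $\phi(0)\in\mathfrak n_0$ it is nilpotent, so $\phi(\infty)$ is nilpotent, and being also in $\mathfrak b_\infty$ it lies in $\mathfrak n_\infty$. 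Thus $\ker\diag=H^0(\G_{0\infty}^{\mathfrak n})$, the modification with fibre at $0$ in $\mathfrak n_0$ and at $\infty$ in $\mathfrak n_\infty$. Under the trace pairing $\End(E)\cong\End(E)^\vee$ one has $\mathfrak n_\bullet=\mathfrak b_\bullet^\perp$, from which $\G_{0\infty}^\vee\otimes\omega_{\pp^1}\cong\G_{0\infty}^{\mathfrak n}$ (using $\omega_{\pp^1}=\O(-0-\infty)$); Serre duality then gives $h^0(\G_{0\infty}^{\mathfrak n})=h^1(\G_{0\infty})$. Combined with the first paragraph, $\dim\ker\diag=h^1(\G_{0\infty})=\dim(W_p\cap W_q)-k$, which is precisely the statement that $\diag\colon W_p\cap W_q\to\cc^{\oplus k}$ is surjective.

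What remains --- and what I expect to be the main obstacle --- is to compute $h^1(\G_{0\infty})$, showing it is $\le1$ and equals $1$ exactly when $L\cong\O_X(np+mq)$ for some $m,n\in\zz$, and to describe the generator of $\ker\diag$ in that case. Decomposing $\End(E)$ into its Levi $\operatorname{End}(N)\oplus\operatorname{End}(B)$ (with $N=\O(-1)^{k-a}$, $B=\O^{a}$) and its nilradical, the computations of $\dim W_p$, $\dim W_q$, $\dim(W_p\cap W_q)$ combine to reduce the claim to: the sum $\mathfrak l_0+\mathfrak l_\infty$ of the images of $P(U)\cap\mathfrak b_0$ and $P(U)\cap\mathfrak b_\infty$ in $\operatorname{End}(N)\oplus\operatorname{End}(B)$ fills the whole Levi generically, and drops by exactly one dimension precisely when $L\cong\O_X(np+mq)$. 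This sum splits over the two blocks: on $\operatorname{End}(B)$ it is surjective iff the $p$--vanishing and $q$--vanishing flags of $H^0(X,L)$ are in general position, which fails (by one step) exactly when some $L(-ip-jq)$ with $i+j=a$ is trivial; on $\operatorname{End}(N)$ surjectivity is governed by the $\omega$--dual version of the same statement, which accounts for the remaining $\O_X(np+mq)$ (those with $n$ or $m$ negative). Finally, when $L\cong\O_X(np+mq)$ --- equivalently some $f_*(L(-ip-jq))=f_*f^*\O_{\pp^1}(n')$, which by Lemma \ref{ezpush} has a distinguished maximal--degree line sub-bundle --- tracking the factorisations $\phi\colon E\to f_*(L(-p-q))\hookrightarrow E$ across the filtration $\{f_*(L(-ip-jq))\}$ shows the now one--dimensional $\ker\diag$ is spanned by an endomorphism whose image lies in a line sub-bundle of $E$ and whose kernel contains a corank--one sub-bundle; choosing a splitting of $E$ adapted to these realises the generator as a matrix with a single nonzero entry.
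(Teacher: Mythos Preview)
Your approach is genuinely different from the paper's and, where complete, more conceptual. The paper chooses an explicit basis of $E$ consisting of sections with prescribed vanishing orders at $p$ and $q$, writes every $\phi\in H^0(\End E)$ as a matrix \eqref{mat}, and reads off all the dimension counts and the ``single nonzero entry'' statement by inspection. You instead recast $W_p,W_q,W_p\cap W_q$ as $H^0$ of Hecke modifications and compute via Riemann--Roch on $\pp^1$. Your determination of $\dim W_q$ through the parabolic $P(U)$ and the relative position of $U=\operatorname{im}(H^0(L)\to L|_{kq})$ against $F_\infty^\bullet$ is correct, and your proof that $\diag$ is surjective --- constancy of the characteristic polynomial forcing $\phi(\infty)\in\mathfrak n_\infty$ once $\phi(0)\in\mathfrak n_0$, followed by the Serre-duality identification $\G_{0\infty}^\vee\otimes\omega_{\pp^1}\cong\G_{0\infty}^{\mathfrak n}$ --- is slicker than anything in the paper, where surjectivity is simply observed from the explicit matrix.

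The gap is exactly where you flag it: the computation of $h^1(\G_{0\infty})$. Your reduction relies on the block decomposition $\End E=\End N\oplus\End B\oplus\Hom(N,B)\oplus\Hom(B,N)$ respecting both Borels $\mathfrak b_0,\mathfrak b_\infty$. This only happens if one can choose the complement $N$ so that both fibres $N|_0,N|_\infty$ are spanned by the ``high--order'' flag vectors, which in turn requires $B|_0$ and $B|_\infty$ to be spanned by the low--order ones --- i.e.\ precisely that $L\ncong\O_X(ap)$ and $L\ncong\O_X(aq)$. In those boundary cases the two blocks interact, and one must check that the jumps from $\End B$ and $\End N$ do not both occur (giving $h^1=2$); this needs a separate argument. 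Likewise, your claim that the $\End N$ piece is handled by ``the $\omega$--dual version'' is correct in spirit (relative duality swaps $L\leftrightarrow L^{-1}$ and $a\leftrightarrow k-a$), but making it precise and reconciling the overlaps in the list of special $L$ takes real work --- comparable to the paper's case analysis. Finally, the identification of the generator of $\ker\diag$ as factoring through a line sub-bundle is asserted rather than proved; in the paper this falls out immediately from the explicit matrix. So steps 1--5 are a genuine improvement, but completing step~6 without in effect redoing the paper's flag combinatorics is not yet demonstrated.
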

 \begin{proof}
 The rough idea is to choose a decomposition of $E$ so that the condition of being order preserving at $p$ is that a matrix for an endomorphism is lower triangular, while the condition of being order preserving at $q$ is that the matrix is upper triangular. We shall see that if $L \ncong \O_X(np + mq)$, then the conditions to be order preserving and $p$ and at $q$ are independent, while if 
 $L \cong \O_X(np + mq)$ we obtain one less condition.
Twisting $L$ by $f^*\O_{\pp^1}(1)$ does not change $End(E)$, so we assume $k \leq \deg L < 2k$.
 
We first prove the case when $L \ncong \O_X(np + mq)$. 
By Lemma \ref{ezpush}, $E \cong \O_{\pp^1}^{\oplus k-a} \oplus \O_{\pp^1}(1)^{\oplus a}$ where $a = \deg L - k$.
Let $s, t \in f^*\O_{\pp^1}(1)$ denote sections defining the map $f$ with $V(s) = kq$ and $V(t) = kp$. For each $0 \leq j \leq a-1$, and $\alpha, \beta \in \cc$, there is a section $\tau_j(\alpha, \beta)= (\alpha s + \beta t) \cdot u_j \in H^0(X, L)$
 where $V(u_j) = jp + (a-1-j)q + r_j$. Note that $r_j \neq p, q$ by assumption. For each $j$, the $\tau_j(\alpha, \beta)$ span a copy of $H^0(\pp^1, \O_{\pp^1}(1))$ inside $H^0(\pp^1, E) = H^0(X, L)$. For $a \leq j \leq k-1$, we choose $\sigma_j \in H^0(X, L)$ so that $V(\sigma_j) =  jp + (k+a-1-j)q + r_j$, where again $r_j \neq p,q$. 
These $\sigma_j$ are non-vanishing on fibers of $f$, so each corresponds to an $H^0(\pp^1, \O_{\pp^1})$ factor inside $H^0(\pp^1, E) = H^0(X, L)$.
With respect to this decomposition of $E$, an element of $H^0(\pp^1, End(E))$ is represented by a block upper triangular matrix where the two diagonal blocks consist of elements of $\cc$ and the upper block consists of linear forms.
 \begin{equation} \label{mat}
\phi =  \left( \begin{matrix}
 c_{0,0} & \cdots & c_{0,a-1} & \alpha_{0, a} s + \beta_{0, a} t & \cdots & \alpha_{0, k-1} s + \beta_{0, k-1}t \\
  c_{1,0} & \cdots & c_{1,a-1} & \alpha_{1, a} s + \beta_{1, a} t & \cdots & \alpha_{1, k-1} s + \beta_{1, k-1}t \\
  \vdots & \ddots & \vdots & \vdots & \ddots & \vdots \\
   c_{d-1,0} & \cdots & c_{a-1,a-1} & \alpha_{a-1, a} s + \beta_{a-1, a} t & \cdots & \alpha_{a-1, k-1} s + \beta_{a-1, a-1}t \\
   0 & \cdots & 0 & c_{a,a} & \cdots & c_{a,k-1} \\
   0 & \cdots & 0 & c_{a+1,a} & \cdots & c_{a+1,k-1} \\
     \vdots & \ddots & \vdots & \vdots & \ddots & \vdots \\
        0 & \cdots & 0 & c_{k-1,a} & \cdots & c_{k-1,k-1} \\
 \end{matrix} \right)
 \end{equation}
 For $\ell \geq a$ and $j \geq a$, the coefficients $\alpha_{\ell, j}$ and $\beta_{\ell, j}$ specify which $\tau_\ell(\alpha_{\ell, j}, \beta_{\ell, j})$ appears in the image of $\sigma_j$ with respect to our chosen decomposition of $H^0(\pp^1, E)$.
 The condition for $\phi$ to be order preserving at $p$ is that $\alpha_{\ell, j} = 0$ for all $\ell, j$ and $c_{\ell, j} = 0$ for all $\ell < j$. Hence, $\dim W_p = k(k+1)/2$. The condition for $\phi$ to be order preserving at $q$ is that $\beta_{\ell, j} = 0$ for all $\ell, j$ and $c_{\ell, j} = 0$ for all $\ell > j$. It follows that $\dim W_p \cap W_q = k$ and, as the diagonal entries are unconstrained, $\diag$ is a surjection. Hence, $\ker(\diag) \cap W_p \cap W_q = \{0\}$.

Now suppose $L \cong \O_X(n p + mq) \ncong \O_X(kp)$. Without loss of generality, we may assume that $n \geq m > 0$. Since $n + m = a+ k$ with $a < k$, we also have $n > a$. Again, we have $E \cong \O_{\pp^1}^{\oplus k-a} \oplus \O_{\pp^1}(1)^{\oplus a}$, but the argument in the previous paragraph must be modified because $r_{n-1} = p$ and $r_n = q$ (or when $n = k$, we have $r_{k-1} = p$ and $r_0 = q$). Instead, for $a \leq j \leq k-1$, we choose $\sigma_j \in H^0(\pp^1, E)$ so that $\ord_p(\sigma_j) = j$ and
\[\ord_{q}(\sigma_j) = \begin{cases} k+a-n & \text{if $j = n$} \\ k + a - n - 1&\text{if $j = n-1$} \\  k+a - j - 1 & \text{otherwise.} \end{cases}\]
If $n = k$, then the first case above never occurs, but we must take $\tau_0(\alpha, \beta) = (\alpha s + \beta t) \cdot u_0$ where $V(u_0) = aq$.
Otherwise, the vanishing orders of $\tau_j$ and $u_j$ may be taken as before.

If $n \neq k$, then 
the condition for $\phi$ in \eqref{mat} to be order preserving at $p$ is that $\alpha_{\ell, j} = 0$ for all $\ell, j$ and $c_{\ell, j} = 0$ for all $\ell < j$. The condition for $\phi$ to be order preserving at $q$ is that all $\beta_{\ell, j} = 0$; and $c_{\ell,j} = 0$ for all $\ell > j$ with $(\ell, j) \neq (n, n-1)$; and $c_{n-1,n} = 0$. Note that $c_{n,n-1}$ need not vanish because $\ord_q(\sigma_{n}) > \ord_q(\sigma_{n-1})$. 

In the case $n=k$, the condition for $\phi$ to be order preserving at $q$ is that $c_{\ell,j} = 0$ for all $\ell > j$  and $\beta_{\ell, j} = 0$ for all $(\ell, j) \neq (0, k-1)$. Note that $\beta_{0, k-1}$ is not required to vanish because $\ord_q(\sigma_{k-1}) = a-1 < a = \ord_q(u_0)$ so $\beta_{0, k-1}$ is not required to vanish. Thus, $\dim W_q =\frac{k(k+1)}{2}+1$. Note that $n = k$ corresponds to the case when $L \cong \O((n+m)q)$.
Our explicit description shows that $\dim W_p \cap W_q = k+1$, and the intersection consists of matrices with arbitrary diagonal entries and at most one non-zero off-diagonal entry. Hence, in all cases, $\ker(\diag) \cap W_p \cap W_q$ consists of matrices with at most one non-zero entry.
   \end{proof}

Having characterized necessary compatibility conditions at the nodes and when they are independent,
we now prove \eqref{equiv}. This will be subsumed by the results of the next section, but we include it here as the proof indicates subvarieties of $\Pic^d(\X_0)$ where the limits of line bundles with a certain splitting type must live.

\begin{Lemma} \label{dimbound}
Let $f\colon  C \rightarrow \pp^1$ be a general genus $g$, degree $k$ cover. Then
\[\dim \Sigma_{\vec{e}}(C, f) \leq g - u(\vec{e}).\]
\end{Lemma}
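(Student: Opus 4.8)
The plan is to degenerate to the chain of elliptic curves $\X_0 \to \P_0$ and bound, on this degenerate fiber, the dimension of the locus of $\L_0 \in \Pic^d(\X_0)$ for which the smoothable endomorphisms of $\f_*\L_0$ are ``too many.'' Concretely, I want to prove \eqref{equiv}: that for every $\delta \geq 0$, the locus of $L \in \Pic^d(C)$ with $h^0(\pp^1, \End(f_*L)) \geq \delta + k^2$ has dimension at most $g - \delta$; since $u(\vec e) = h^0(\End(\O(\vec e))) - k^2$, this immediately gives $\dim \Sigma_{\vec e}(C,f) \leq g - u(\vec e)$ for all $\vec e$. By semicontinuity of $h^0(\P_t, \End(\f_*\L_t))$ in the smoothing family, it suffices to bound, on the central fiber, the dimension of the locus in $\Pic^d(\X_0)$ where $\dim V_\L \geq \delta + k^2$, where $V_\L \subseteq H^0(\P_0, \End(\f_*\L_0))$ is the space of sections that extend over the smoothing. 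Here I must take care to fix all the discrete partition data $d = d_1 + \cdots + d_g$ and argue that finitely many such partitions cover all limits, so it is enough to bound each piece.

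Next I would use the injection $\iota\colon H^0(\P_0, \End(\f_*\L_0)) \hookrightarrow \bigoplus_i H^0(P_i, \End(E_i))$ together with Lemma \ref{cond}: every element of $\iota(V_\L)$ satisfies, at each node $p_i$, the three conditions (order-preserving from the left, order-preserving from the right, and the diagonal-matching condition $d^{(j)}_{p_i}(\phi_i) = d^{(k-j)}_{p_i}(\phi_{i+1})$), which are $k^2$ linear conditions per node. So $\dim V_\L$ is bounded by $\sum_i \dim H^0(P_i, \End(E_i))$ minus the rank of the total system of nodal conditions, $\sum_{i=1}^{g-1}(\text{codimension imposed at }p_i)$. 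On a chain, $\sum_i \dim H^0(P_i,\End(E_i)) \leq gk^2 + (\text{something bounded in terms of how unbalanced the }E_i\text{ are})$, but the cleaner bookkeeping is: $\dim V_\L \leq k^2 + \sum_i \big( \dim H^0(\End E_i) - k^2 \big)_{+} - (\text{failures of independence of nodal conditions})$, or something organized so that the ``default'' value is exactly $k^2$ and each elliptic component/node that deviates from generic behavior costs one dimension of the base. For this I invoke Lemma \ref{zing}, which says that for general $L_i$ (specifically $L_i \ncong f_i^*\O(n)$ and $L_i \ncong \O(np_{i-1} + mp_i)$) the left- and right-node constraints on $\phi_i$ are independent, $\dim(W_{p_{i-1}} \cap W_{p_i}) = k$, and $\diag$ is surjective — which is exactly the generic count making the $k^2$ conditions at each node cut $\dim V_\L$ down to $k^2$.

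The heart of the argument, and the step I expect to be the main obstacle, is the dimension count on the \emph{bad} loci: I must show that each way the $E_i$ fail to be balanced, or each way the nodal conditions of Lemma \ref{zing} fail to be independent, forces $\L_0$ into a subvariety of $\Pic^d(\X_0)$ of the corresponding codimension. The point is that $E_i = (f_i)_*L_i$ is maximally unbalanced only when $L_i$ lies in a codimension-$\geq 1$ subvariety of $\Pic^{d_i}(X_i)$ (by Lemma \ref{ezpush}, $E_i$ is balanced unless $L_i \cong f_i^*\O(n)$), and the ``exceptional'' cases of Lemma \ref{zing} ($L_i \cong \O(np_{i-1} + mp_i)$) likewise cut down a factor of $\Pic^{d_i}(X_i)$; moreover each unit of extra $h^0(\End E_i)$ beyond $k^2$, or each drop in the rank of the nodal system, must be accounted against the codimension of the locus of $\L_0$ where it occurs. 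I would organize this as: writing $h^0(\P_0,\End(\f_*\L_0)) \le k^2 + \sum_i \big(h^0(P_i,\End E_i) - k^2\big) + \#\{\text{coincidences among the } d^{(j)} \text{ at nodes}\}$ or similar, and then checking term by term that if the right side exceeds $k^2 + \delta$ then $\L_0$ is constrained to a locus of dimension $\le g - \delta$ inside $\Pic^d(\X_0) = \prod_i \Pic^{d_i}(X_i)$. Assembling these component-wise and node-wise estimates into a single clean inequality — and making sure no overcounting occurs where a single special $L_i$ affects the conditions at both of its nodes — is the delicate combinatorial bookkeeping that the lemma really turns on.
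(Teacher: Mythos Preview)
Your strategy is exactly the paper's --- degenerate, bound $\dim V_\L$ via Lemmas~\ref{cond} and~\ref{zing}, conclude by semicontinuity --- but you are missing the one observation that makes it close, and as a result you are chasing cases that need not arise. You do not need to range over all degree partitions: since $\X_0$ is of compact type, a \emph{single} choice $d = d_1 + \cdots + d_g$ already identifies limits of all line bundles on the general fiber with $\Pic^d(\X_0) = \prod_i \Pic^{d_i}(X_i)$, so one well-chosen partition suffices. The paper's choice is the whole point: because $k>2$ and $g>1$ (the case $g=1$ being Lemma~\ref{ezpush}), one may pick the $d_i$ so that none is divisible by $k$. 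Then $L_i \cong f_i^*\O_{\pp^1}(n)$ is impossible for degree reasons, every $E_i$ is balanced by Lemma~\ref{ezpush}, and Lemma~\ref{zing} applies on every component without exception. Your proposed term $\sum_i (h^0(\End E_i)-k^2)_+$ vanishes identically, and the worry about ``how unbalanced the $E_i$ are'' disappears. Without this choice your plan has a genuine gap: Lemma~\ref{zing} explicitly excludes $L \cong f^*\O(n)$, so you have no stated tool to control $W_p \cap W_q$ in that case.

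With that choice the count is a one-line sum rather than ``delicate combinatorial bookkeeping.'' By Lemma~\ref{zing}, $\dim W_{p_1}^1 = \tfrac{k(k+1)}{2}+\epsilon_1$, $\dim W_{p_{g-1}}^g = \tfrac{k(k+1)}{2}+\epsilon_g$, and $\dim(W^i_{p_{i-1}}\cap W^i_{p_i}) = k+\epsilon_i$ for interior $i$, where $\epsilon_i\in\{0,1\}$ records whether $L_i\cong \O_{X_i}(np_{i-1}+mp_i)$. The $k(g-1)$ diagonal-matching conditions of Lemma~\ref{cond} are independent because $\diag$ is surjective, so summing gives $\dim V_\L \leq k^2+\sum_i\epsilon_i$. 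The locus where $\sum_i\epsilon_i=\delta$ has codimension $\geq \delta$ in the product, since each $\epsilon_i = 1$ is a codimension-one condition on its factor. There is no overcounting issue to manage: Lemma~\ref{zing} already packages both nodal constraints on a given $\phi_i$ into the single number $\dim(W^i_{p_{i-1}}\cap W^i_{p_i})$, and the only ``bad'' event on component $i$ is the single codimension-one condition $\epsilon_i = 1$.
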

\begin{proof}
The case $g = 1$ was proved in Lemma \ref{ezpush}, so we assume $g > 1$. Since we are also assuming $k > 2$, we can choose a degree distribution $d = d_1 + \ldots + d_g$ so that no $d_i$ is a multiple of $k$. In particular, given $\L_0 = (L_1, \ldots, L_g) \in \Pic^d(\X_0)$, we may assume that $L_i \ncong f^*\O(n)$.
Define
\[\epsilon_i = \begin{cases} 1 & \text{if $i \neq 1, g$ and $L_i \cong  \O_{X_i}(np_{i-1} + mp_i)$} \\ 1 & \text{if $i = 1$ and $L_1 \cong \O_{X_1}(mp_1)$} \\ 1 & \text{if $i = g$ and $L_g \cong \O_{X_g}(np_{g-1})$} \\ 0 &\text{otherwise.} \end{cases}\]
By Lemmas \ref{cond} and \ref{zing},
\begin{align*}
\dim V_{\L} &\leq \sum_{i=1}^g \dim \{\phi \in H^0(P_i, End(E_i)) : \phi \text{ order preserving at nodes on $X_i$}\} - k(g-1) \\
&\leq \frac{k(k+1)}{2} +\epsilon_1 + \frac{k(k+1)}{2} +\epsilon_g + \sum_{i=2}^{g-1} (k + \epsilon_i ) - k(g-1)\\
&\leq k^2 + \delta
\end{align*}
where $\delta$ is the number of $i$ for which $L_i \cong \O_{X_i}(np_{i-1} + mp_{i})$ for some $m, n$ (with $m = 0$ if $i = 1$ and $n = 0$ if $i = g$).  In particular, the codimension of the subvariety of line bundles $\L_0$ in $\Pic^d(X)$ for which $V_{\L} \geq k^2 + \delta$ is at least $\delta$. This implies that for general $\f_t: \X_t \rightarrow \P_t$ in the family $\f\colon  \X \rightarrow \P$, 
\[\dim\{L \in \Pic^d(\X_t) : h^0(\P_t, End((\f_t)_*L)) \geq \delta +k^2\} \leq g - \delta.\]
To finish, note that $h^0(\P_t, End((\f_t)_*L)) \geq k^2+\delta$ implies $u((\f_t)_*L) \geq \delta$, and so for each $\vec{e}$, we have $\dim \Sigma_{\e}(\X_t, f_t) \leq g - u(\vec{e})$ for general $t$. By upper-semicontinuity, this upper bound on $\dim \Sigma_{\vec{e}}(C, f)$ holds for general degree $k$ covers $f\colon  C \rightarrow \pp^1$.
\end{proof}

 \section{Smoothness} \label{smoothness}

In this section, we prove that $\Sigma_{\vec{e}}(C, f)$ is smooth for general $f\colon  C \rightarrow \pp^1$. This should be thought of as an analogue of the Gieseker-Petri theorem, which was first proved by Gieseker \cite{G}, and later by Eisenbud-Harris \cite{EH2} using a degeneration with elliptic tails.

For every $L \in \Pic^d(C)$, there is a natural map
\begin{equation} \label{dmap}
H^1(\pp^1, f_*\O_C) =H^1(C, \O_C) = \Def^1(L) \rightarrow \Def^1(f_*L) = H^1(\pp^1, End(f_*L)),
\end{equation}
sending a first order deformation of $L$ to the induced deformation of the push forward. This map is realized by taking cohomology of the map of sheaves $\eta: f_*\O_C \rightarrow End(f_*L)$ that locally sends a function $z$ on $C$ to the endomorphism ``multiplication by $z$" on $L$, viewed as an $\O_{\pp^1}$ module.
The kernel of \eqref{dmap} is the tangent space to $\Sigma_{\vec{e}}(C, f)$.
Thus, our goal is to show that \eqref{dmap} is surjective for all $L \in \Pic^d(C)$. Indeed, this implies that if $\Sigma_{\vec{e}}(C, f)$ is non-empty, 
\[\dim T_L \Sigma_{\e}(C, f) \leq g - \dim \Def(f_*L) = g - u(\vec{e}) \leq \dim \Sigma_{\vec{e}}(C, f),\]
so it is smooth.

We proceed by showing that the Serre dual of \eqref{dmap},
\begin{equation} \label{sd}
\mu: H^0(\pp^1, End(f_*L) \otimes \omega_{\pp^1}) \rightarrow H^0(\pp^1, (f_*\O_C)^\vee \otimes \omega_{\pp^1}),
\end{equation}
is injective. The kernel of this map is the ``obstruction to smoothness." We think of $H^0(\pp^1, End(f_*L) \otimes \omega_{\pp^1})$ as the subspace of $H^0(\pp^1, End(f_*L))$ vanishing at two prescribed points. The map $\mu$ is thus a restriction of the map on global sections induced by
\[\tilde{\mu}: End(f_*L) \cong End(f_*L)^\vee \to (f_* \O_C)^\vee,\]
which is the composition of the canonical isomorphism $End(f_*L) \cong End(f^*L)^\vee$ with the map dual to $\eta$. For any vector bundle $E$, this isomorphism
$End(E) \cong End(E)^\vee$
 is induced by the perfect pairing $End(E) \times End(E) \to \O$ given by $(\phi, \psi) \mapsto \tr(\phi \cdot \psi)$. Therefore, $\tilde{\mu}$ sends
an endomorphism $\phi \in End(f_*L)(U)$ to the linear functional on $(f_*\O_C)(U)$ given by $z \mapsto \tr(\phi  \cdot \eta(z))$. We will need to know that this map is non-zero on certain elements over components of our degeneration.

\begin{Lemma} \label{nz}
Let $f\colon  X \rightarrow \pp^1$ be a degree $k$ map of an elliptic curve to $\pp^1$ and let $L \in \Pic^d(C)$.
If $\phi \in H^0(\pp^1, End(f_*L))$ is represented by a matrix with a single nonzero entry, then $\tilde{\mu}(\phi) \neq 0$. 
\end{Lemma}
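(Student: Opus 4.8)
The plan is to compute $\tilde{\mu}(\phi)$ explicitly using the description of $\tilde{\mu}$ as $z \mapsto \tr(\phi \cdot \eta(z))$ and the explicit matrix form of $\phi$ coming from the decomposition of $E = f_*L$ given in the proof of Lemma \ref{zing}. First I would fix the decomposition $E \cong \O_{\pp^1}^{\oplus k-a} \oplus \O_{\pp^1}(1)^{\oplus a}$ with the sections $\sigma_j$ (resp.\ $\tau_j(\alpha,\beta)$) used there, so that a basis element of $H^0(U,E)$ is a section of $L$ on $f^{-1}(U)$ and $\eta(z)$ is literally multiplication by a rational function $z$ on $X$. If $\phi$ has a single nonzero entry in position $(\ell, j)$, then $\tr(\phi \cdot \eta(z))$ picks out the $(j,\ell)$ entry of the matrix of $\eta(z)$ in this basis, so I must show that as $z$ ranges over $f_*\O_C$ (equivalently, over rational functions on $X$ regular on the relevant open set), the $(j,\ell)$-entry of multiplication by $z$ is not identically zero as a section of the appropriate line bundle on $\pp^1$.

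The key computation is thus: given the two basis sections $s_j$ (the $j$-th column generator) and $s_\ell$ (the $\ell$-th row generator) of $E$, the $(j,\ell)$-entry of $\eta(z)$ is the component of $z \cdot s_j$ along $s_\ell$ in the chosen splitting. I would argue this is nonzero for a suitable choice of $z$ by a direct local/divisor-theoretic argument on $X$: multiplication by a function $z$ with a pole concentrated where $s_\ell$ vanishes and a zero where $s_j$ has its pole produces a section whose expansion along the $s_\ell$-direction is nonzero. Concretely, since the $\sigma_j$ and $\tau_j$ were chosen with prescribed vanishing divisors $jp + (\cdots)q + r_j$ supported at distinct general points, one can find a rational function $z$ on $X$ (a ratio of such sections, or a section of some $f^*\O(m)$ twist) so that $z \cdot s_j = s_\ell + (\text{lower order / other components})$, forcing the matrix entry to be a nonzero constant or linear form. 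The point that $L \ncong f^*\O(n)$, already in force in the surrounding argument, guarantees $E$ has the balanced-type form of Lemma \ref{ezpush} and hence that the needed sections exist.

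I expect the main obstacle to be bookkeeping the dual twist by $\omega_{\pp^1}$ and checking that the nonzero matrix entry of $\eta(z)$ that we produce is actually a \emph{global} section of the correct Hom-sheaf on $\pp^1$ (so that it genuinely witnesses $\tilde{\mu}(\phi) \neq 0$ rather than just a nonzero local expression): one has to be careful that $\eta(z)$ for $z \in (f_*\O_C)(U)$ lands in $\End(f_*L)(U)$ and trace against $\phi$ correctly, tracking the degrees of the off-diagonal entries (constants for the diagonal blocks, linear forms for the off-diagonal block). A clean way to sidestep delicate global arguments is to work at the level of stalks at a single general point $x \in \pp^1$: it suffices to exhibit one $z$ and one point where $\tr(\phi \cdot \eta(z))$ is nonzero, and local triviality of $f$ away from the ramification makes multiplication by $z$ transparent there. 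So the real content is the elementary linear-algebra observation that, in a fixed basis of the free module $\mathcal{O}_{X,x}^{\oplus \deg}$ over $\mathcal{O}_{\pp^1,x}$, the map "multiplication by a general function" has every matrix entry nonzero, hence pairs nontrivially against any single-entry $\phi$; I would carry this out and then note it globalizes since the set where $\tilde{\mu}(\phi)$ vanishes is closed.
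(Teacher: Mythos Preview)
Your final paragraph lands on exactly the paper's approach: restrict to a small open $U \subset \pp^1$ where $L$ trivializes on $f^{-1}(U)$, and produce a function $z \in \O_C(f^{-1}(U))$ whose multiplication operator has nonzero $(j,\ell)$ entry in the given basis. The paper does this in one stroke by observing that the basis sections of $(f_*L)|_U$ (coming from the global splitting in which $\phi$ has a single nonzero entry) become \emph{nonvanishing} functions $g_1,\ldots,g_k$ on $f^{-1}(U)$ after shrinking $U$, so one may simply take $z = g_j/g_i$; then $\eta(z)(g_i) = g_j$ and $\tr(\phi\cdot\eta(z)) \neq 0$.

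Two remarks on your route there. First, the detour through the explicit $\sigma_j, \tau_j$ decomposition from Lemma~\ref{zing} is unnecessary: the lemma is stated for \emph{any} splitting of $f_*L$ in which $\phi$ has a single nonzero entry, and the proof only needs that such a splitting exists, not its particular form. In particular, the hypothesis $L \ncong f^*\O(n)$ is neither assumed in the statement nor used in the paper's proof, so you should not invoke it. Second, your assertion that ``multiplication by a general function has every matrix entry nonzero'' is not automatic in an arbitrary basis --- over an unramified open, $\eta(z)$ is diagonal in the natural \'etale basis, so off-diagonal entries vanish identically there. What makes it true in \emph{this} basis is precisely that the global summands of $f_*L$ restrict to sections of $L$ that are nowhere-vanishing on a small enough $U$; this is the one substantive point, and the paper isolates it by writing down the ratio $g_j/g_i$ explicitly rather than appealing to genericity.
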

\begin{proof}
For each open subset $U \subset \pp^1$, we have a commutative diagram
\begin{center}
\begin{tikzcd}
& H^0(\pp^1, End(f_*L)) \arrow{d} \arrow{r}{\tilde{\mu}} & \arrow{d} H^0(\pp^1, (f_*\O_C)^\vee) \\
& H^0(U, End(f_*L)) \arrow{r}[swap]{(\tilde{\mu})|_U} & H^0(U, (f_*\O_C)^\vee).
\end{tikzcd}
\end{center}
It suffices to show that the image of $\phi$ in the lower right is nonzero.
Choose $U$ small enough that $L$ is trivialized on $f^{-1}(U)$ and $f_*\O_C$ is trivialized on $U$, so
we have isomorphisms $(f_*L)|_U \cong (f_*\O_C)|_U \cong \O_{U}^{\oplus k}$. By hypothesis, there exists a basis so that, $\phi|_U : \O_U^{\oplus k} \rightarrow \O_U^{\oplus k}$ is represented by a matrix with one non-zero entry, say in the $(i, j)$ slot. These basis vectors of $\O_U^{\oplus k}$ correspond to non-vanishing functions in $\O_C(f^{-1}(U))$. The ratio of the $j$th basis element over the $i$th basis element therefore defines a function $z \in \O_C(f^{-1}(U))$ such that the $(j, i)$ entry of $\eta(z)$ is non-zero.
Hence, $\tr(\phi|_U \cdot \eta(z)) \neq 0$, showing $\tilde{\mu}(\phi|_U)$ is non-zero.
\end{proof}

We now deduce the desired injectivity by studying limits on the central fiber of our degeneration from the previous section, continuing all notation developed there.
Recall that for each $\L_0 \in \Pic^d(\X_0)$, we understood $V_\L \subset H^0(End(\f_*\L_0))$ through its image under the inclusion $\iota$ as compatible tuples $(\phi_1, \ldots, \phi_g)$ in $\bigoplus_{i=1}^g H^0(P_i, End(E_i))$.
\begin{Lemma} \label{sm}
For general $\X_t \rightarrow \P_t $ in our degeneration,
\[
\mu_t: H^0(\P_t, End(f_*\L_t) \otimes \omega_{\P_t}) \rightarrow H^0(\P_t, (f_*\O_{\X_t})^\vee \otimes \omega_{\P_t}),
\]
is injective for all $\L_t \in \Pic^d(\X_t)$. Hence, if it is non-empty, $\Sigma_{\vec{e}}(C, f)$ is smooth for general degree $k$ covers $f\colon  C \rightarrow \pp^1$.
\end{Lemma}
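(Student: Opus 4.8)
The plan is to run a limit-linear-series style degeneration argument on the central fiber $\X_0 \to \P_0$, showing that any nonzero element in the kernel of $\mu_t$ would specialize to a nonzero element of $H^0(\P_0, End(\f_*\L_0))$ lying in $V_\L$ and dying under $\tilde\mu$, and then to rule this out by combining the structure theorems already proved, namely Lemma \ref{cond}, Lemma \ref{zing}, and Lemma \ref{nz}. By semicontinuity it suffices to exhibit \emph{one} map $\X_0 \to \P_0$ in the admissible cover family (with a suitable degree distribution $d = d_1 + \cdots + d_g$, chosen so that no $d_i$ is divisible by $k$, exactly as in the proof of Lemma \ref{dimbound}) for which the limit map $\mu_0$ is injective on the limit of the relevant sections for every $\L_0 \in \Pic^d(\X_0)$; in fact I will take $\L_0$ general in each $\Pic^{d_i}(X_i)$ so that the correction terms $\epsilon_i$ all vanish, i.e.\ no $L_i$ is of the form $\O_{X_i}(np_{i-1}+mp_i)$.

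First I would set up the limit. Given a section $\psi_t \in \ker \mu_t \subset H^0(\P_t, End(\f_*\L_t)\otimes\omega_{\P_t})$, after rescaling it extends to a section of $End(\f_*\L)\otimes\omega_\P$ over $\Delta$ whose restriction $\psi_0$ to the central fiber is nonzero; its image $\iota(\psi_0) = (\phi_1,\dots,\phi_g)$ then satisfies the compatibility conditions \eqref{x}, \eqref{y}, \eqref{d} of Lemma \ref{cond}, and moreover each $\phi_i$ vanishes at the two points of $\P_0$ lying over the two marked points of $\pp^1$ that cut out $\omega$ (these can be arranged to be smooth points of $\P_0$ lying on, say, the first and last components, or distributed one per appropriate component — the bookkeeping here mirrors the ``vanishing at two prescribed points'' interpretation of $\omega_{\P_0}$). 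The point of the dimension count in Lemma \ref{dimbound}, re-examined, is that with all $\epsilon_i = 0$ the compatibility conditions are \emph{exactly} the expected number $k^2$ at each node, so the space of compatible tuples has dimension exactly $k^2$ globally; imposing the two vanishing conditions coming from $\omega_{\P_0}$ cuts this down, and what remains is precisely $H^0(\P_0, End(\f_*\L_0)\otimes\omega_{\P_0})$ mapping into $\bigoplus H^0(P_i, End(E_i))$. The key structural input is the last sentence of Lemma \ref{zing}: any $\phi_i$ which is order preserving at both nodes of $X_i$ and lies in the kernel of the diagonal map $\diag$ is represented by a matrix with at most one nonzero entry.

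Next comes the propagation argument. Starting from one end of the chain, the vanishing condition at a marked point forces the diagonal entries $d^{(j)}_{p}(\phi_i)$ on the extreme components to be constrained, and condition \eqref{d} of Lemma \ref{cond} then propagates these diagonal constraints down the chain: the diagonal data of $\phi_i$ at $p_i$ equals (a reindexing of) the diagonal data of $\phi_{i+1}$ at $p_i$. I would argue that a kernel element forces all diagonal entries of all $\phi_i$ to vanish — roughly, the diagonal entries form a ``constant'' along the chain by \eqref{d}, but the two $\omega$-vanishing conditions at the ends pin that constant to zero (this is the analogue of the classical fact that the Gieseker--Petri obstruction, restricted to a chain, is forced to be ``diagonal-free''). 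Once every $\phi_i \in \ker(\diag)\cap W_{p_{i-1}}\cap W_{p_i}$, Lemma \ref{zing} says each $\phi_i$ is a single-entry matrix; since $\psi_0 \ne 0$, some $\phi_{i_0} \ne 0$, and Lemma \ref{nz} applied on the component $X_{i_0}$ gives $\tilde\mu(\phi_{i_0}) \ne 0$, contradicting $\psi_0 \in \ker\tilde\mu$ (one checks the component of $\tilde\mu(\psi_0)$ supported on $P_{i_0}$ is computed by $\tilde\mu$ of $\phi_{i_0}$ on that component, so it is nonzero). This contradiction shows $\ker\mu_0 = 0$ on the limits of interest, hence $\mu_t$ is injective for general $t$; the smoothness of $\Sigma_{\vec e}(C,f)$ then follows from the tangent-space computation already recorded just before the statement, together with Lemma \ref{dimbound}.

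The main obstacle I anticipate is the middle step: rigorously showing that the kernel of $\mu_0$ forces all diagonal entries of all $\phi_i$ to vanish, and correctly encoding the two ``$\omega_{\P_0}$-vanishing'' conditions on the nodal curve $\P_0$. The subtlety is that $\f_*\L_0$ is not locally free at the nodes and $\omega_{\P_0}$ has its own nodal behavior, so ``vanishing at two points'' must be interpreted through the inclusion $\iota$ and the component isomorphisms \eqref{compiso}; one must make sure the two prescribed points can be placed on components (or that the general choice of the $2$-torsion / branch data places them) so that the propagation of diagonal data via condition \eqref{d} genuinely terminates in forcing $d^{(j)}_p(\phi_i) = 0$ rather than leaving a nonzero constant undetermined. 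Handling the interaction of the two nodes on each interior $X_i$ — already flagged as the source of subtlety in the paper — is where I'd expect to spend the most care, but Lemma \ref{zing} is precisely engineered to control it, so the argument should go through.
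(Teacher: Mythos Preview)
Your overall architecture is correct and matches the paper's: reduce to the central fiber, show that any limit $(\phi_1,\dots,\phi_g)\in\iota(V_\L)$ satisfying the $\omega$-vanishing has each $\phi_i\in\ker(\diag)\cap W_{p_{i-1}}\cap W_{p_i}$, invoke the last sentence of Lemma~\ref{zing} to get single-entry matrices, and finish with Lemma~\ref{nz}. The propagation mechanism you describe is also right, and the paper makes it precise exactly as you anticipate: one places the two $\omega$-points at $\zeta_1=f(p_0)\in P_1$ and $\zeta_g=f(p_g)\in P_g$; vanishing of $\phi_1$ at $\zeta_1$ kills \emph{all} the constant entries $c_{j,\ell}$ in the matrix \eqref{mat}, so in particular $\phi_1\in W^1_{p_0}\cap\ker(\diag^1_{p_0})$, and then condition~\eqref{d} of Lemma~\ref{cond} together with the observation that $\diag_{p_{i-1}}$ and $\diag_{p_i}$ agree up to a permutation on $W_{p_{i-1}}\cap W_{p_i}$ pushes $\ker(\diag)$ down the entire chain.

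There is, however, a genuine gap. You write that you will ``take $\L_0$ general in each $\Pic^{d_i}(X_i)$ so that the correction terms $\epsilon_i$ all vanish.'' But the statement demands injectivity of $\mu_t$ for \emph{every} $\L_t\in\Pic^d(\X_t)$, not just for general $\L_t$; this is exactly what is needed to conclude smoothness of $\Sigma_{\vec e}(C,f)$ at every point rather than only at a general point. Restricting to general $\L_0$ therefore does not suffice, and no semicontinuity in $\L$ will rescue you (injectivity is open, not closed, in $\L$). The fix is simply to drop that restriction: the final sentence of Lemma~\ref{zing} holds in \emph{both} cases, whether or not $L_i\cong\O_{X_i}(np_{i-1}+mp_i)$, so once you have $\phi_i\in\ker(\diag)\cap W_{p_{i-1}}\cap W_{p_i}$ the single-entry conclusion follows for arbitrary $\L_0$. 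The only genericity actually used is the choice of degree distribution with no $d_i$ divisible by $k$ (so $L_i\ne f_i^*\O(n)$ automatically), plus a separate easy argument when $g=1$.
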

\begin{proof}
Let $\omega$ be the relative dualizing sheaf of $\P \rightarrow \Delta$. 
 Recall that $p_0$ and $p_g$ are points of total ramification of $\f|_{\X_0}$ that are distinct from the nodes. We set $\zeta_1 = f(p_0) \in P_1$ and $\zeta_g = f(p_g) \in P_g$, so we have an isomorphism $\omega|_{\P_0} \cong \O_{\P_0}(-\zeta_1 - \zeta_g)$.

Let $\L$ be given and define $V_\L(-\zeta_1-\zeta_g) \subset V_\L$ to be the subspace of sections vanishing at $\zeta_1$ and $\zeta_g$.
We have a commutative diagram
\begin{center}
\begin{tikzcd}
V_\L(-\zeta_1-\zeta_g) \arrow[swap]{ddr}{\iota} \arrow{r} &H^0(\P_0, End(\f_*\L_0) \otimes \omega|_{\P_0}) \arrow{d} \arrow{r}{\mu} &H^0(\P_0, (\f_*\O_{\X_0})^\vee \otimes \omega|_{\P_0}) \arrow{d} \\
&H^0(\P_0, End(\f_*\L_0)) \arrow{d} \arrow{r}{\tilde{\mu}} &H^0(\P_0, (\f_*\O_{\X_0})^\vee) \arrow{d}\\
 &\bigoplus_{i=1}^g H^0(P_i, End(E_i)) \arrow[swap]{r}{\oplus \tilde{\mu}_i} &\bigoplus_{i=1}^g H^0(P_i, ((f_i)_*\O_{X_i})^\vee).
\end{tikzcd}
\end{center}
By uppersemi-continuity, injectivity of $\mu_t$ for general $t$ follows from showing the composition along the top row is injective. We will show that the composition from the upper left to the lower right along the bottom is injective.

For each $i$, 
let $W_p^i \subset H^0(P_i, End(E_i))$ denote the subspace of endomorphisms on component $i$ that are order preserving at $p$. In addition, let $\diag^i_{p} : W_{p}^i \rightarrow \cc^{\oplus k}$ be defined by $\phi_i \mapsto (d_{p}^{(0)}(\phi_i), \ldots, d_{p}^{(k-1)}(\phi_i))$, which we saw in Lemma \ref{zing} corresponds to taking diagonal entries of a matrix representative for an endomorphism. 
On $W_{p_{i-1}} \cap W_{p_{i}}$, the maps $\diag^i_{p_i}$ and $\diag^i_{p_{i-1}}$ are both defined  and are related by a permutation (they correspond to taking diagonal entries of a matrix in different orders). Hence,
\begin{equation} \label{twoeq}
W_{p_{i-1}} \cap W_{p_{i}} \cap \ker(\diag^i_{p_i}) = W_{p_{i-1}} \cap W_{p_{i}} \cap \ker(\diag^i_{p_{i-1}}).
\end{equation}
Lemma \ref{cond} \eqref{d}, implies that in a compatible tuple, if $\phi_{i-1} \in \ker(\diag^{i-1}_{p_i})$ then $\phi_i \in \ker(\diag^i_{p_i})$.
Note that, taking a matrix representation for $\phi_1$ as in \eqref{mat}, the condition $\phi_1(\zeta_1) = 0$ is that $c_{j, \ell} = 0$ and $\alpha_{\ell, j}  = 0$ for \textit{all} $\ell, j$. Thus, if $\phi_1(\zeta_1) = 0$, then $\phi_1 \in  W_{p_0}^1 \cap \ker(\diag^1_{p_0})$. 
Similarly, if $\phi_g(\zeta_g) = 0$ then $\phi_g \in W_{p_g} \cap \ker(\diag^g_{p_g})$.
By Lemma \ref{cond} and \eqref{twoeq}, we therefore have
\begin{equation*}
\iota(V_\L(-\zeta_1-\zeta_g)) \subseteq \left\{(\phi_1, \ldots, \phi_g) : 
\text{ $\phi_i \in W_{p_{i-1}}^i \cap W_{p_i}^i \cap \ker(\diag^{i}_{p_{i}})$ for $1 \leq i \leq g$}
\right\}.
\end{equation*}
If $g > 1$, then we can choose a degree distribution so that $\deg(L_i)$ is never a multiple of $k$, and hence $L_i$ is never $f^*\O(n)$.
The final sentence of Lemma \ref{zing} then ensures that each $\phi_i$ is represented by a matrix with at most one-nonzero entry. In the case $g = 1$ we need an additional argument if $L_1 = f^*\O(n)$. In this case, choosing any splitting of $f_*L_1$ induces a splitting of $End(f_*L_1)$ where all but one of the matrix entries consist of a constant or linear form, and one entry is quadratic. After imposing vanishing at $\zeta_1$ and $\zeta_g$, only the quadratic entry can be non-zero. In either case, all $\phi_i$ have at most one non-zero entry, so Lemma \ref{nz} now shows that the composition of the inclusion $\iota$ with $\oplus \tilde{\mu}_i$ is injective.
\end{proof}

 \section{Existence} \label{existence}

In this section, we exploit the combinatorial structure of splitting loci stratifications to deduce existence from a simple calculation.
This relies on universal enumerative formulas for splitting loci found in \cite{L}. 

\begin{theorem}[Thm. 1.1 of \cite{L}] \label{mine}
If $E$ is a vector bundle on $\pi: B \times \pp^1 \rightarrow B$ and $\codim \Sigmabar_{\vec{e}}(E) = u(\vec{e})$, then the class $[\Sigmabar_{\vec{e}}(E)]$ is given by a universal formula in terms of Chern classes $\pi_* E(m)$ and $\pi_*E(m-1)$ for suitably large $m$. Moreover, if this expected class is non-zero, then $\Sigmabar_{\vec{e}}(E)$ is non-empty.
\end{theorem}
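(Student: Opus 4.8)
The plan is to reduce the statement to the moduli stack $\B$ of rank $k$ vector bundles on $\pp^1$, where a tautological presentation is available, and then to deduce the universal formula by pullback. First I would recall that $\B$ is a smooth Artin stack, stratified by the locally closed substacks $\Sigma_{\vec{e}}(\E)$, where $\E$ is the universal bundle; by the computation in Lemma \ref{low}, $\Sigmabar_{\vec{e}}(\E)$ has pure codimension $u(\vec{e})$ in $\B$. The key structural input is that $\B$ admits, Zariski-locally or after passing to a suitable atlas, a presentation as a quotient $[\text{Hom-space}/\text{GL}]$: for $m$ large, every bundle with the relevant discrete invariants is a quotient of $\O(-m)^{\oplus N}$ in a unique way up to the action of an algebraic group, and the splitting loci become ordinary degeneracy loci for the map of bundles $\pi^*\pi_*\E(m-1)(-1) \to \E$ (or equivalently in terms of the maps $\pi_*\E(m-1) \to \pi_*\E(m)$ obtained by pushing forward after twisting). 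On such a presentation, $[\Sigmabar_{\vec{e}}(\E)]$ is computed by a Giambelli/Kempf--Laksov type resultant in the Chern classes of $\pi_*\E(m)$ and $\pi_*\E(m-1)$; this is exactly the ``universal formula'' in the statement, and one checks it is independent of the auxiliary choice of $m$ by a standard stabilization argument (the two twists differ by one, and incrementing $m$ changes both Chern data compatibly).

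Next I would transfer this to an arbitrary base. Given $E$ on $\pi\colon B \times \pp^1 \to B$ with $\codim \Sigmabar_{\vec{e}}(E) = u(\vec{e})$, the bundle $E$ is classified by a map $\kappa\colon B \to \B$, and $\Sigmabar_{\vec{e}}(E) = \kappa^{-1}\Sigmabar_{\vec{e}}(\E)$ as closed subschemes. Because $\Sigmabar_{\vec{e}}(\E)\subset\B$ has the expected codimension and (crucially) a well-behaved resolution realizing it as a degeneracy locus, its class pulls back correctly: when $\kappa^{-1}\Sigmabar_{\vec{e}}(\E)$ also has the expected codimension $u(\vec{e})$, we get $[\Sigmabar_{\vec{e}}(E)] = \kappa^*[\Sigmabar_{\vec{e}}(\E)]$ in the Chow (or cohomology) ring of $B$. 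Since Chern classes commute with pullback, $\kappa^*$ of the universal resultant formula is the same resultant formula in $c_i(\pi_*E(m))$ and $c_i(\pi_*E(m-1))$, which proves the first assertion.

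For the non-emptiness claim: if $\Sigmabar_{\vec{e}}(E)$ were empty, then $\kappa$ would factor through the open complement $\B \setminus \Sigmabar_{\vec{e}}(\E)$, hence $\kappa^*[\Sigmabar_{\vec{e}}(\E)] = 0$; contrapositively, a non-zero expected class forces non-emptiness. The main obstacle I anticipate is not the pullback formalism but the construction of the degeneracy-locus presentation on $\B$ with the correct scheme structure and the verification that its fundamental class is given by the claimed formula — i.e.\ controlling the resolution of $\Sigmabar_{\vec{e}}(\E)$ uniformly in $\vec{e}$, handling the fact that $\B$ is a non-separated Artin stack (so one must work with an atlas or with equivariant intersection theory), and checking the $m$-independence. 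Once that is in place, everything else is functoriality of Chern classes and the expected-dimension hypothesis. I would therefore organize the write-up as: (1) recall $\B$ and its stratification; (2) build the degeneracy-locus model and compute $[\Sigmabar_{\vec{e}}(\E)]$; (3) prove $m$-independence; (4) pull back to $B$ and conclude both statements.
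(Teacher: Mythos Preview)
This theorem is not proved in the paper at all: it is quoted as Theorem~1.1 of the companion paper \cite{L} and used as a black box (see the opening of Section~\ref{existence}, where the author writes ``This relies on universal enumerative formulas for splitting loci found in \cite{L}'' and then immediately proceeds to apply the result). There is therefore no proof here to compare your proposal against.

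That said, your outline is a reasonable sketch of the kind of argument one expects for such a statement. A couple of remarks. First, realizing $\Sigmabar_{\vec{e}}$ as a single Kempf--Laksov/Porteous degeneracy locus is too optimistic for a general splitting type: the condition ``$h^0(E(m)) \geq h^0(\O(\vec{e})(m))$ for all $m$'' is an intersection of rank conditions indexed by $m$, not a single one, so the ``resolution'' and the resulting formula are more intricate than a single resultant (this is in fact the content of \cite{L}, and is why the paper cites it rather than reproving it). Second, your non-emptiness argument via factoring through the open complement is morally right but slightly delicate on an Artin stack; the cleaner version, which is standard in degeneracy-locus arguments, is that the resolution produces a class supported on $\Sigmabar_{\vec{e}}(E)$ regardless of the actual codimension, so if the locus is empty the expected class is forced to vanish. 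Your identification of the main obstacle --- building the degeneracy model uniformly in $\vec{e}$ and handling the stack-theoretic intersection theory --- is accurate.
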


To make use of the above theorem, we need the Chern classes of push forwards of twists of the vector bundle $\E$ defined in Section \ref{split}.

\begin{Lemma} \label{cE}
Let $f\colon  C \rightarrow \pp^1$ be a curve of genus $g$ with a degree $k$ map to $\pp^1$. 
Let $\mathcal{L}$ be a Poincar\'e line bundle on $C \times \Pic^d(C)$ and let $\E = (f \times \mathrm{id})_* \mathcal{L}$ on $\pp^1 \times \Pic^d(C)$. Let $\pi: \pp^1 \times \Pic^d(C) \rightarrow \Pic^d(C)$ be the projection.
Let $\theta$ denote the class of the theta divisor on $\Pic^d(C) = \Jac(C)$.
Then we have $c_i(\pi_*\E(m)) = (-1)^i\theta^i/i!$ modulo classes supported on $\Supp(R^1\pi_*\E(m))$. The total Chern class is $c(\pi_*\E(m)) = e^{-\theta}$ away from $\Supp(R^1\pi_*\E(m))$.
\end{Lemma}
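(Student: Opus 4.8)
The plan is to compute $\pi_*\E(m)$ by rewriting it in terms of the Poincaré bundle and applying Grothendieck–Riemann–Roch (or a direct cohomology-and-base-change argument) over the locus where $R^1\pi_*\E(m)$ vanishes. First I would observe that, by the projection formula and the commuting triangle
\begin{center}
\begin{tikzcd}
& C \times \Pic^d(C) \arrow{r}{f \times \mathrm{id}} \arrow{rd}[swap]{\nu} & \pp^1 \times \Pic^d(C) \arrow{d}{\pi} \\
& & \Pic^d(C),
\end{tikzcd}
\end{center}
one has $\pi_*\E(m) = \pi_*\big((f\times\mathrm{id})_*\L \otimes \O_{\pp^1}(m)\big) = \pi_*(f\times\mathrm{id})_*\big(\L \otimes (f\times\mathrm{id})^*\O_{\pp^1}(m)\big) = \nu_*\big(\L(mH)\big)$, where $H = f^*\O_{\pp^1}(1)$ pulled back to $C \times \Pic^d(C)$. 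So the problem reduces to computing the Chern classes of $\nu_*(\L(mH))$, a pushforward along the family of curves $\nu\colon C\times\Pic^d(C) \to \Pic^d(C)$.

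Next, over the open set $\Pic^d(C)\setminus \Supp(R^1\pi_*\E(m))$ — equivalently where $R^1\nu_*(\L(mH))=0$ — the sheaf $\nu_*(\L(mH))$ is locally free, and I would compute its Chern character by Grothendieck–Riemann–Roch: $\mathrm{ch}(\nu_!(\L(mH))) = \nu_*\big(\mathrm{ch}(\L(mH))\cdot \mathrm{td}(\nu)\big)$, which equals $\mathrm{ch}(\nu_*(\L(mH)))$ over that open set. The key input is the standard computation of $c_1(\L)$ on $C\times\Pic^d(C)$: after normalizing the Poincaré bundle, $c_1(\L) = d\cdot \eta + \gamma$, where $\eta$ is the class of a point fiber of $\nu$ and $\gamma$ restricts to zero on each factor with $\gamma^2 = -2\eta\cdot\nu^*\theta$ (see \cite[\S IV.3]{ACGH}); the twist $mH$ only changes the fiberwise degree and contributes classes pulled back from $\pp^1$ that die under $\nu_*$ or only affect the rank. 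Carrying the GRR computation through, the $\nu_*$ of the relevant terms collapses — the only class surviving is built from $\gamma$, and $\nu_*(\gamma^j/j!)$ produces exactly $(-\theta)^j/j!$ after using $\nu_*(\eta) = 1$ and the relation above — giving $\mathrm{ch}(\nu_*(\L(mH))) = (\text{rank})\cdot e^{?} $ with total Chern class $e^{-\theta}$. This matches the classical formula $\mathrm{ch}(\nu_!\L) = k'\cdot e^{-\theta}$ type identities in \cite[Ch. VIII]{ACGH}, adapted to our twist. Hence $c_i(\pi_*\E(m)) = (-1)^i\theta^i/i!$ and $c(\pi_*\E(m)) = e^{-\theta}$ modulo classes supported on $\Supp(R^1\pi_*\E(m))$.

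The main obstacle I anticipate is bookkeeping rather than conceptual: one must be careful that GRR computes the Chern character of the \emph{complex} $R\nu_*$, so the identification with $\mathrm{ch}(\nu_*(\L(mH)))$ is only valid after restricting to the open locus where $R^1$ vanishes — this is exactly why the statement is phrased "modulo classes supported on $\Supp(R^1\pi_*\E(m))$," and I would be explicit that all computations take place on that open set (or that the error classes are supported there). A secondary point to handle cleanly is the dependence on the choice of Poincaré bundle: different normalizations twist $\L$ by a line bundle pulled back from $\Pic^d(C)$, which changes $\E(m)$ by $\pi^*(\text{line bundle})$ and hence multiplies the total Chern class by $e^{c_1}$; one should either fix the normalization used in \cite{ACGH} or note that the splitting-locus classes computed via Theorem \ref{mine} are insensitive to this twist since $\Sigma_{\vec{e}}$ is intrinsic to $\E$ up to such twists. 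Modulo these care points, the computation is a routine application of GRR combined with the standard intersection theory of the Jacobian.
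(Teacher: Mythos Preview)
Your proposal is correct and follows essentially the same approach as the paper: both arguments use the projection formula to identify $\pi_*\E(m)$ with $\nu_*(\L \otimes \alpha^* H^{\otimes m})$, recognize this as the push forward of a Poincar\'e line bundle (after the identification $\Pic^d(C) \cong \Pic^{d+mk}(C)$ given by tensoring with $H^{\otimes m}$), and then invoke the standard Grothendieck--Riemann--Roch computation from \cite[p.~336]{ACGH}. The paper simply cites the end result from \cite{ACGH} rather than sketching the GRR calculation as you do, but the route is the same.
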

\begin{proof}
We have a commutative diagram
\begin{center}
\begin{tikzcd}
& C \arrow{r}{f} & \pp^1 \\
&C \times \Pic^d(C) \arrow{u}{\alpha} \arrow{r}{f \times \mathrm{id}} \arrow{rd}[swap]{\nu} & \arrow{u}[swap]{\beta} \pp^1 \times \Pic^d(C) \arrow{d}{\pi}\\
& & \Pic^d(C)
\end{tikzcd}
\end{center}
Let $H = f^*(\O_{\pp^1}(1))$.
By the projection formula, 
\[\E(m) = ((f\times \mathrm{id})_* \L) \otimes \beta^* \O_{\pp^1}(m) =(f \times \mathrm{id})_*(\L \otimes \alpha^* H^{\otimes m}),\]
and so $\pi_*\E(m) = \nu_*(\L \otimes \alpha^* H^{\otimes m})$. We have that $\L \otimes \alpha^*H^{\otimes m}$ is the pullback of a Poincar\'e line bundle on $C \times \Pic^{d+mk}(C)$ via the identification $\Pic^d(C) \rightarrow \Pic^{d+mk}(C)$ given by tensoring with $H^{\otimes m}$. The calculation in \cite[p. 336]{ACGH} determines the Chern classes of the push forward of a Poincar\'e line bundle $\nu_*(\L \otimes \alpha^*H^{\otimes m})$ away from $\Supp(R^1\nu_*(\L \otimes \alpha^*H^{\otimes m})) = \Supp(R^1 \pi_*\E(m))$.
\end{proof}

Given the Chern classes of $\pi_*\E(m)$, the classes of splitting degeneracy loci are (in theory) computable by the techniques of \cite{L}.
\begin{Example} \label{revisit}
Continuing Example \ref{Geoff}, the classes of the Brill-Noether splitting degeneracy loci on $\Pic^4(C)$ for $C$ a general trigonal curve of genus $5$ are
\[[\Sigmabar_{(-2, -1, 0)}(\E)] = \theta, \qquad [\Sigmabar_{(-2, -2, 1)}(\E)] = [\Sigmabar_{(-3, 0, 0)}(\E)] = \frac{\theta^4}{24}, \qquad [\Sigmabar_{(-3, -1, 1)}(\E)] = \frac{\theta^5}{60}.\]
The first three classes are computed using \cite[Lemma 5.1]{L}. The last class comes from twisting and substituting the Chern classes from Lemma \ref{cE} into the universal formula found in \cite[Example 6.2]{L}. Notice that $[\Sigmabar_{(-3, -1, 1)}(\E)]$ is twice the class of a point, as found in Example \ref{Geoff}.
Also, $[W^1_4(C)] = [\Sigmabar_{(-2, -2, 1)}(\E)] + [\Sigmabar_{(-3, 0, 0)}(\E)] = \frac{\theta^4}{12}$ is the class computed by Kempf-Kleimann-Laksov \cite{K, KL1, KL2}.
\end{Example}

The universal formulas guaranteed by Theorem \ref{mine} are difficult to compute in general, but Lemma \ref{cE} implies the following remarkable fact.
 Given a splitting type $\vec{e}$, let $|\vec{e}| = e_1 + \ldots + e_k$.

\begin{Lemma} \label{hoho}
Fix $k$ and $\vec{e} = (e_1, \ldots, e_k)$. Given $f\colon  C \rightarrow \pp^1$ a genus $g$ curve with degree $k$ map to $\pp^1$, let $d = g+k+ |\vec{e}| - 1$.
The expected class of $\overline{\Sigma}_{\vec{e}}(C,f)$ in $\Pic^d(C)$ is $a_{\vec{e}} \cdot \theta^{u(\vec{e})}$ for some constant $a_{\vec{e}} \in \qq$ depending only on $\vec{e}$ (independent of $g$).
\end{Lemma}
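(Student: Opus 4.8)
The plan is to invoke Theorem \ref{mine}, which asserts that the expected class $[\overline{\Sigma}_{\vec{e}}(\E)]$ is a universal polynomial $P_{\vec{e}}$ in the Chern classes of $\pi_*\E(m)$ and $\pi_*\E(m-1)$ for $m$ sufficiently large. The key input is Lemma \ref{cE}, which computes these Chern classes: away from the support of $R^1\pi_*\E(m)$, one has $c(\pi_*\E(m)) = e^{-\theta}$, i.e. $c_i(\pi_*\E(m)) = (-1)^i \theta^i/i!$, and likewise for $m-1$. So I would first argue that we may work on the open locus where both $R^1\pi_*\E(m)$ and $R^1\pi_*\E(m-1)$ vanish: since $\overline{\Sigma}_{\vec{e}}(\E)$ has the expected codimension $u(\vec{e})$ (by the hypothesis implicit in applying Theorem \ref{mine}, which holds by Lemma \ref{low} combined with Theorem \ref{main}, or just as a running hypothesis), and since for $m$ large the supports $\Supp(R^1\pi_*\E(m))$ shrink to something of codimension larger than $u(\vec{e})$ (indeed, they can be taken to be contained in $\overline{\Sigma}_{\vec{e}'}$ for $\vec{e}'$ far more unbalanced than $\vec{e}$, hence of higher codimension), the class is unchanged by restricting to this open set. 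On the Jacobian, which has no odd cohomology interfering and where $\theta$ generates the relevant part of the cohomology ring, substituting these values makes $P_{\vec{e}}$ become a polynomial in $\theta$ alone.

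Next I would pin down the degree: $[\overline{\Sigma}_{\vec{e}}(\E)]$ lives in $H^{2u(\vec{e})}(\Pic^d(C))$ because it has codimension $u(\vec{e})$, and $P_{\vec{e}}$ is weighted-homogeneous of that codimension in the Chern classes. After the substitution $c_i \mapsto (-1)^i\theta^i/i!$, the result is a rational multiple of $\theta^{u(\vec{e})}$; call the coefficient $a_{\vec{e}}$. The crucial point for the statement is that this coefficient does not depend on $g$: the universal polynomial $P_{\vec{e}}$ depends only on $\vec{e}$ (and $k$, which is determined by the length of $\vec{e}$) by Theorem \ref{mine}, and the substituted values $(-1)^i\theta^i/i!$ likewise involve no reference to $g$ — the genus enters only through the ambient variety $\Pic^d(C)$ and the normalization $d = g + k + |\vec{e}| - 1$, which precisely makes $\E$ have the fixed fiberwise degree $d - g + 1 - k = |\vec{e}|$ independent of $g$. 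Hence $a_{\vec{e}} \in \qq$ is a genus-independent constant.

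I expect the main obstacle to be the bookkeeping in the first step: verifying that for $m$ large enough the correction terms "supported on $\Supp(R^1\pi_*\E(m))$" genuinely do not contribute to the class in codimension $u(\vec{e})$. One needs that $\Supp(R^1\pi_*\E(m))$ has codimension exceeding $u(\vec{e})$; this follows because $R^1\pi_*\E(m)$ is supported where $h^1(C, L(mH)) \neq 0$, i.e. on $\overline{\Sigma}_{\vec{e}^{\,\prime}}$ for unbalanced $\vec{e}^{\,\prime}$ whose expected codimension grows without bound as $m \to \infty$, together with the dimension bound of Lemma \ref{dimbound} (or Lemma \ref{low}) ensuring the actual codimension is at least the expected one on a general cover — but in fact one only needs the universal statement over the moduli stack $\B$ from the proof of Lemma \ref{low}, where the expected codimension is exact. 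Once that is in place, everything else is a formal substitution and degree count.
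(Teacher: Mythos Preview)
Your proposal is correct and follows essentially the same approach as the paper's proof: invoke Theorem \ref{mine} to get a universal polynomial in the Chern classes of $\pi_*\E(m)$, then substitute the values $c_i = (-1)^i\theta^i/i!$ from Lemma \ref{cE}, noting that neither the universal formula nor the substituted values depend on $g$. The paper's argument is more terse---it does not explicitly discuss the support of $R^1\pi_*\E(m)$ here---whereas you spend a paragraph on it; your handling is fine (and the cleanest version is simply that for $m$ sufficiently large, $R^1\pi_*\E(m)$ vanishes identically on $\Pic^d(C)$, so Lemma \ref{cE} gives the Chern classes exactly), but it is more than the lemma requires, since the ``expected class'' is by definition the evaluation of the universal formula.
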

\begin{proof}
The loci $\overline{\Sigma}_{\vec{e}}(C,f)$ are splitting loci of the rank $k$, degree $|\vec{e}|$ vector bundle $\E = (f \times \mathrm{id})_* \L$ on $\pp^1 \times \Pic^d(C)$.
By Theorem \ref{mine}, the expected class of $\overline{\Sigma}_{\vec{e}}(C,f)$ is given by a universal formula, depending only on $\vec{e}$, in terms of the Chern classes of $\pi_* \E(m)$ for suitably large $m$. The $i$th Chern class of this vector bundle is a multiple of $\theta^i$ that does not depend on $g$ by Lemma \ref{cE}.
\end{proof}

\begin{remark}
For a fixed $k$, a choice of $|\vec{e}|$ determines an allowed difference $d - g = k+|\vec{e}| - 1$. The above is therefore akin to the observation that the formula for the class of $W^r_d(C)$ for general $C$ in $\M_g$ computed by Kempf-Kleiman-Laksov \cite{K, KL1, KL2} depends only on $d - g$.
\end{remark}

Lemma \ref{hoho} allows us to leverage the combinatorics of the partial ordering to deduce existence from calculations for certain special splitting types. Following \cite{L}, let us write $(-n, *, \ldots, *)$ to denote the splitting type of $\O_{\pp^1}(-n) \oplus B(k-1, |\vec{e}|+n)$.

\begin{Lemma} \label{comp}
For every $\e$, there exists $n$ such that $(-n, *, \ldots, *)\leq \e$.
We have $a_{(-n,*,\ldots, *)} = 1/u(-n,*,\ldots,*)!$.
\end{Lemma}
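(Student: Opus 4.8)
The plan is to prove the two assertions of Lemma \ref{comp} in turn. For the first, the statement $(-n, *, \ldots, *) \leq \e$ for a suitable $n$ is purely combinatorial: recall that $(-n, *, \ldots, *)$ denotes the splitting type $\O_{\pp^1}(-n) \oplus B(k-1, |\e|+n)$. Since all these splitting types have the same total degree $|\e|$, we need only produce $n$ so that the partial sums $e_1' + \cdots + e_j'$ of $(-n, *, \ldots, *)$ are dominated by those of $\e$ for every $j$. I would take $n = -e_1 \geq -e_1$, or more safely $n$ as large as needed (e.g. $n \geq -e_1$ will force the first coordinate $-n \leq e_1$), and then check that because the remaining $k-1$ coordinates of $(-n, *, \ldots, *)$ are as balanced as possible subject to having total degree $|\e| + n$, any other partial sum is automatically as small as it can be given the total. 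Concretely, for a balanced bundle the partial sums are the smallest possible among bundles of that rank and total degree, so $e_1' + \cdots + e_j' = -n + (\text{smallest partial sum of } B(k-1,|\e|+n)) \leq e_1 + (e_2 + \cdots + e_j)$ once $n$ is large enough that $-n \leq e_1$ and the balanced remainder stays low; choosing $n$ minimal with $-n \le e_1$ works, and one verifies the inequality for all $j$ directly. The main subtlety here is just bookkeeping with the definition of $\leq$ and of $B(k-1, \cdot)$.

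For the second assertion, $a_{(-n,*,\ldots,*)} = 1/u(-n,*,\ldots,*)!$, I would invoke \cite[Lemma 5.1]{L} (or whichever result in \cite{L} computes the class of $\overline\Sigma_{(-n,*,\ldots,*)}(E)$ for a splitting type with a single ``special'' summand). The point is that $\overline\Sigma_{(-n,*,\ldots,*)}(\E)$ is exactly the locus where $h^0(\pp^1, \E(n-1)) \geq 1$ on fibers — equivalently the degeneracy locus of a single map of bundles $\pi_*\E(n-2) \to \pi_*\E(n-1)$ or, better, where the rank of $R^1\pi_*\E(n-1)$ or $\pi_*\E$ jumps — and by the Thom–Porteous / Giambelli–Thom–Porteous formula such a locus has class given by a Schur polynomial $\Delta$ in the Chern classes of the relevant bundle. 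By Lemma \ref{cE}, those Chern classes are $c_i = (-1)^i \theta^i / i!$, i.e. the total Chern class is $e^{-\theta}$. Substituting $e^{-\theta}$ into the Thom–Porteous expression for a locus of expected codimension $u := u(-n,*,\ldots,*)$ collapses, after a standard Schur-function identity (the Schur polynomial of a rectangular shape evaluated on the Chern roots of a bundle with total Chern class $e^{-\theta}$), to $\theta^{u}/u!$. Hence $a_{(-n,*,\ldots,*)} = 1/u!$ as claimed.

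The step I expect to be the real obstacle is pinning down the precise Thom–Porteous/Schur expression for $[\overline\Sigma_{(-n,*,\ldots,*)}(\E)]$ and then carrying out the collapse to $\theta^u/u!$. The class of a splitting degeneracy locus with one distinguished summand is still a determinant/Schur polynomial of a rectangular partition (of size roughly $1 \times u$ or its conjugate), not a single Chern class, so the computation is not literally one line; it requires the combinatorial identity that the Schur polynomial $s_{(u)}$ (a single row) — or more precisely the relevant rectangular Schur function appearing in \cite[Lemma 5.1]{L} — evaluated on a bundle whose Chern character is concentrated so that $c(\cdot) = e^{-\theta}$, equals $\theta^u/u!$. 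This is precisely the kind of identity behind the classical computation that $[W^r_d(C)]$ is a product of factorials times $\theta^{\rho}$ (Kempf–Kleiman–Laksov), so I would model the argument on that: write the Chern roots formally, use that $\prod(1 + x_i) = e^{-\theta}$ forces the generating function identity $\sum_j h_j t^j = e^{\theta t}$ for complete homogeneous symmetric functions $h_j = \theta^j/j!$, and then observe that the Schur determinant for a single-row (or single-column) shape of size $u$ is exactly $h_u = \theta^u/u!$. Everything else — matching \cite[Lemma 5.1]{L}'s normalization, confirming the expected codimension equals $u(-n,*,\ldots,*)$, and the twisting argument of Lemma \ref{cE} to reduce to Chern classes of $\pi_*\E(m)$ — is routine once that identity is in hand.
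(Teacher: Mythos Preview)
Your overall strategy matches the paper's, but there are a couple of issues worth flagging.

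For the first assertion, your reasoning that ``for a balanced bundle the partial sums are the smallest possible'' is backwards: the balanced bundle is the \emph{maximal} element in the partial ordering, so its partial sums are the \emph{largest} among bundles of given rank and degree. Consequently, your claim that the minimal $n$ with $-n \leq e_1$ works is false. For instance, with $k=3$ and $\vec{e} = (-1,-1,2)$, taking $n=1$ gives $(-1,*,*) = (-1,0,1)$, whose second partial sum $-1$ exceeds the second partial sum $-2$ of $\vec{e}$. What \emph{is} true is that for $2 \leq j < k$ the $j$th partial sum of $(-n,*,\ldots,*)$ behaves like $-n \cdot \tfrac{k-j}{k-1} + O(1)$, so for $n$ sufficiently large all partial sums with $j < k$ go to $-\infty$; hence some large $n$ works. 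The paper simply writes down a specific value of $n$ and moves on.

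For the second assertion, you correctly identify \cite[Lemma 5.1]{L} as the key input, but the paper uses it far more directly than you suggest. That lemma gives the closed-form expression
\[
[\Sigmabar_{(-n,*,\ldots,*)}(\E)] \;=\; \left[\frac{c\bigl((\pi_*\E(n-1))^\vee\bigr)^2}{c\bigl((\pi_*\E(n))^\vee\bigr)}\right]_{u(-n,*,\ldots,*)},
\]
and since Lemma \ref{cE} gives $c((\pi_*\E(m))^\vee) = e^{\theta}$, the right-hand side is immediately $[e^\theta]_u = \theta^u/u!$. No Schur-polynomial manipulation or Thom--Porteous determinant is needed; your route via $h_j = \theta^j/j!$ would reach the same endpoint but is more circuitous. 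One technical point you pass over: Lemma \ref{cE} only determines the Chern classes modulo classes supported on $\Supp(R^1\pi_*\E(m))$, so the paper first observes that $\Supp(R^1\pi_*\E(n-1)) = \Sigmabar_{(-n-1,*,\ldots,*)}$ has codimension strictly larger than $u(-n,*,\ldots,*)$, which justifies computing on the complement.
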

\begin{proof}
We may take $n = -(|\vec{e}| + e_1k)$.
Notice that $\Supp(R^1\pi_*\E(n-1)) = \Sigmabar_{(-n-1, *, \ldots, *)}$, which has codimension larger than $u(-n, *, \ldots, *)$. Therefore, we may calculate the class of $\Sigmabar_{(-n,*, \ldots, *)}$ on the complement of $\Supp(R^1\pi_*\E(n-1))$. On the complement, Lemma \ref{cE} says that $c((\pi_*\E(n-1))^\vee) = c((\pi_*\E(n))^\vee)  = e^{\theta}$.
By \cite[Lemma 5.1]{L},
\[[\Sigmabar_{(-n,*, \ldots, *)}] = \left[\frac{c((\pi_*\E(n-1))^\vee)^2}{c((\pi_* \E(n))^\vee)} \right]_{u(-n,*,\ldots, *)} = \left[\frac{(e^{\theta})^2}{e^\theta}\right]_{u(-n,*,\ldots, *)}=  \frac{\theta^{u(-n,*,\ldots,*)}}{u(-n,*,\ldots,*)!}\]
as desired.
\end{proof}

\begin{proof}[Proof of Theorem \ref{main}]
We will show that $a_{\vec{e}}$ is non-zero for all $\e$. By the second half of Theorem \ref{mine}, this will imply $\Sigmabar_{\vec{e}}(C, f)$ is non-empty whenever $u(\vec{e}) \leq g$. Then, Lemmas \ref{low} and \ref{dimbound} show that $\Sigmabar_{\vec{e}}(C, f)$ has dimension $g - u(\vec{e})$ and is the closure of $\Sigma_{\vec{e}}(C, f)$. Lemma \ref{sm} shows $\Sigma_{\vec{e}}(C, f)$ is smooth.

Fix $\vec{e}$ and choose $n$ such that $(-n, *, \ldots, *) \leq \e$. Choose any $g' \geq u(-n, *, \ldots, *)$ and let $f': C' \rightarrow \pp^1$ be a general point of $\H_{k,g'}$. Let $d' = g' + k + |\vec{e}| - 1$. By Lemma \ref{comp}, $\overline{\Sigma}_{(-n, *, \ldots, *)}(C',f') \subset \Pic^{d'}(C')$ is non-empty. Thus, $\overline{\Sigma}_{\e}(C', f') \subset \Pic^{d'}(C')$ is non-empty too. By Lemmas \ref{low} and \ref{dimbound}, $\codim \overline{\Sigma}_{\e}(C', f') = u(\e)$. Being non-empty of the expected codimension on a projective variety, $[\overline{\Sigma}_{\e}(C', f') ] = a_{\vec{e}} \theta^{u(\vec{e})} \neq 0$ on $\Pic^{d'}(C')$. Hence $a_{\vec{e}} \neq 0$, as desired.
\end{proof}

\end{document}